\newcommand{\Exp}{\mathbb{E}}
\newcommand{\eps}{\varepsilon}
\renewcommand{\epsilon}{\varepsilon}
\newcommand{\la}{\langle}
\newcommand{\ra}{\rangle}
\newcommand{\lp}{\left(}
\newcommand{\rp}{\right)}
\newcommand{\ls}{\left[}
\newcommand{\rs}{\right]}
\newcommand{\lv}{\left|}
\newcommand{\rv}{\right|}
\newcommand*{\affaddr}[1]{#1} % No op here. Customize it for different styles.
\newcommand*{\affmark}[1][*]{\textsuperscript{#1}}
\spnewtheorem{assumption}{Assumption}{\bfseries}{\itshape}
\begin{document}
\title{Zeroth-order methods for noisy H\"older-gradient functions
% \thanks{The research of A. Gasnikov and P. Dvurechensly was partially supported by the Ministry of Science and Higher Education of the Russian Federation (Goszadaniye) 075-00337-20-03, project no. 0714-2020-0005.}
}

%\titlerunning{Short form of title}        % if too long for running head

\author{
    Innokentiy Shibaev\affmark[2,1] \and
    Pavel Dvurechensky\affmark[3,2] \and
    Alexander Gasnikov\affmark[1,2,3] %etc.
}

\authorrunning{I. Shibaev, P. Dvurechensky, A. Gasnikov} % if too long for running head

\institute{
    I. Shibaev \at
    \email{innokentiy.shibayev@phystech.edu}
    \and
    P. Dvurechensky \at
    \email{pavel.dvurechensky@wias-berlin.de}
    \and
    A. Gasnikov \at
    \email{gasnikov@yandex.ru}
    \and
    \affaddr{\affmark[1]Moscow Institute of Physics and Technology, Moscow, Russia}\\
    \affaddr{\affmark[2]HSE University, Moscow, Russia}\\
    \affaddr{\affmark[3]Weierstrass Institute for Applied Analysis and Stochastics, Berlin, Germany}%
}

% \date{Received: date / Accepted: date}

\maketitle

\begin{abstract}
In this paper, we prove new complexity bounds for zeroth-order methods in non-convex optimization with inexact observations of the objective function values. We use the Gaussian smoothing approach of \cite{NesterovSpokoiny2015} and extend their results, obtained for optimization methods for smooth zeroth-order non-convex problems, to the setting of minimization of functions with H\"older-continuous gradient with noisy zeroth-order oracle, obtaining noise upper-bounds as well. We consider finite-difference gradient approximation based on normally distributed random Gaussian vectors and prove that gradient descent scheme based on this approximation converges to the stationary point of the smoothed function. We also consider convergence to the stationary point of the original (not smoothed) function and obtain bounds on the number of steps of the algorithm for making the norm of its gradient small. Additionally we provide bounds for the level of noise in the zeroth-order oracle for which it is still possible to guarantee that the above bounds hold. We also consider separately the case of $\nu = 1$ and show that in this case the dependence of the obtained bounds on the dimension can be improved. 

\keywords{gradient-free methods \and zeroth-order optimization \and non-convex problem \and inexact oracle}
\end{abstract}

\section{Introduction}
The main advantage of zeroth-order (derivative-free) optimization methods \cite{rosenbrock1960automatic,fabian1967stochastic,brent1973algorithms,spall2003introduction,conn2009introduction,larson2019derivative-free} is that computing function value is, in general, simpler than computing its gradient vector. On the one hand, zeroth-order methods usually have worse convergence rates, and may be inferior to gradient methods endowed with Fast Automatic Differentiation (FAD) technique, for which it is known \cite{kim1984efficient,baydin2018automatic} that if there is a series of computational operations to evaluate the value of a function, then with at most four times large number of arithmetic operations it is possible to evaluate the gradient of this function.
On the other hand, there are still a number of situations, when the objective is given as a black-box, there is no access to function derivatives and the FAD technique is not applicable. 
One of the many important recent examples is Reinforcement Learning problems, where the goal is to find an optimal control strategy by observing, in a stochastic environment, some black-box reward function values, see \cite{sutton2018reinforcement} for a review and examples.
The problem can be even more complicated when one deals with computer simulation of some physical processes, e.g. satellite movement, since such models often have some noise in their outputs. Similarly, in Reinforcement Learning only noisy observations of the reward function are available. 
Moreover, the noise can be biased \cite{DOI:10.1609/aaai.v34i04.6086} and standard batch averaging may not help.
Thus, it is important to analyze zeroth-order methods in the setting of possibly biased noisy observations of the objective function.

Another important application of zeroth-order methods with noisy observations is min-max or min-min problems, which are particular settings of bi-level optimization problems. For example, in \cite{bolte2020holderian} the authors consider the problem
\[
\min_x \{f(x)=\max_{y}L(x,y)\},
\]
where $f$ has locally H\"older-continuous gradient and only inexact values of $f$ and its gradient are available via inexact solution to the inner maximization problem in $y$. This leads to a non-convex minimization problem with inexact oracle and the authors focus on first-order inexact oracle. Motivated, in particular, by such problems, we consider in this paper the case when only noisy observations of the objective value $f$ are available. Our bounds on the noise help to evaluate what accuracy of the solution to the inner problem is sufficient to solve the outer problem with some desired accuracy.

\textbf{Related works.}
In \cite{NesterovSpokoiny2015}, among other settings, the authors consider minimization of a non-convex function $f$ on $\mathbb{R}^n$ with exact values of $f$ and used the Gaussian smoothing technique with parameter $\mu$ to prove convergence to a stationary point of a smoothed function $f_{\mu}(x)$, which is a uniform approximation to $f(x)$. The main idea is that the smoothed function $f_{\mu}(x)$ has better properties, e.g. it is smooth even if $f$ is non-smooth. 
In the case when $f$ has Lipschitz-continuous gradient, the authors of \cite{NesterovSpokoiny2015}  prove that their method achieves $\Exp\ls\|\nabla f(x_{N})\|^2_{\ast}\rs\leqslant \varepsilon_{\nabla f}$ after $N=O\lp\frac{n}{\varepsilon_{\nabla f}}\rp$ steps with 2 oracle calls in each step. 
When $f$ is Lipschitz-continuous they estimate an appropriate value of the parameter $\mu$ such that the smoothed function $f_{\mu}(x)$ satisfies $|f_{\mu}(x)-f(x)|\leq \epsilon_f$ for all $x \in \mathbb{R}^n$, and prove that in order to obtain $\Exp\ls\|\nabla f_{\mu}(x_{N})\|^2_{\ast}\rs\leqslant \varepsilon_{\nabla f}$ it is sufficient to make $N=O\lp\frac{n^3}{\epsilon_f \varepsilon_{\nabla f}^2}\rp$ steps of their method with 2 oracle calls in each step. 

This technique was later used in the works \cite{ghadimi2013stochastic} (RSGF algorithm) and \cite{ghadimi2013minibatch} (RSPGF algorithm) to build an algorithm which finds so-called $(\varepsilon,\Lambda)$-solution i.e. a point $x$ s.t. $\mathbb{P}\{\|\nabla f(x)\|^2_{\ast}\leqslant \varepsilon_{\nabla f}\}\geqslant 1-\Lambda$, in the case of Lipschitz-gradient function and stochastic oracle $F(x,\xi)$ s.t. $\Exp_{\xi}[F(x, \xi)] = f(x)$. They have shown that to find an $(\varepsilon_{\nabla f},\Lambda)$-solution it is sufficient to make $O\lp C_1\frac{n}{\varepsilon_{\nabla f}}+C_2\frac{n}{\varepsilon_{\nabla f}^2}\rp$ calls to the stochastic zeroth-order oracle (here the constants $C_1,C_2$ depend on $\Lambda$ and other parameters of the problem, such as Lipschitz constant and diameter of the feasible set). 

In the works \cite{berahas2020theoretical,berahas2019global} the authors compare several types of gradient approximations $g(x)$, including  Gaussian smoothing and smoothing based on uniform sampling on the Euclidean sphere, in terms of the number of calls to the inexact zeroth-order oracle $\hat{f}(x)$ for $f$ which guarantees the approximation condition $\|g(x)-\nabla f(x)\|_{\ast}\leqslant \theta \|\nabla f(x)\|_{\ast}$, where $\theta\in[0,1)$. They show that random-directions-based methods lose in theory to the standard finite differences approach, needing more oracle calls to ensure the above approximation condition. However, in the work \cite{liu2018zerothorder} 
 zeroth-order variants of stochastic variance reduction methods called ZO-SVRG are considered and a variant which uses random directions approach in the experiments required less number of oracle calls than the standard finite differences method (ZO-SVRG-Coord), despite having worse theoretical convergence rate. In this paper we do not rely on the above approximation condition, which allows to obtain better complexity bounds for the considered approach based on random directions and Gaussian smoothing.

\textbf{Our contributions.}
The works listed above mainly focus on the setting when the objective $f$ has Lipschitz-continuous gradient. The only paper, which considers non-smooth setting with $f$ being Lipschitz continuous is \cite{NesterovSpokoiny2015}, where the value of the objective $f$ is assumed to be known exactly.
Our main contribution consists in obtaining complexity bounds for zeroth-order methods with inexact values of the objective in the setting of $f$ having H\"older-continuous gradient, i.e. for some $L_{\nu}>0, \nu \in [0,1]$, $\|\nabla f(x) -\nabla f(y)\|_*\leq L_{\nu}\|x-y\|^{\nu}$. This assumption is more general and includes as particular cases the previously considered settings of objectives $f$ with Lipschitz-continuous gradient and objectives $f$ which are differentiable and Lipschitz continuous.
Our approach uses finite-difference gradient approximation based on normally distributed random Gaussian vectors $u$ and we prove that a gradient descent scheme based on this approximation ensures
\begin{align*}
    \min\limits_{k\in\{0, N-1\}}\Exp_{\mathcal{U}}\ls\|\nabla f_\mu(x_k)\|_{\ast}^2\rs\leqslant \varepsilon_{\nabla f}\text{ after }N = O\lp\frac{n^{\frac{(7-3\nu)}{2}}}{\varepsilon_{\nabla f}^{\frac{3-\nu}{1+\nu}}}\rp
\end{align*}
steps. Here $x_k$ are the iterates, $f_\mu(\cdot)$ is a smoothed version of the objective $f$, $\mathcal{U} = (u_0, . . . , u_{N-1})$ is the history of the realizations of random Gaussian vector $u$.
We also consider convergence to a stationary point of the initial (not smoothed) objective function $f$ and prove that this scheme ensures
\begin{align*}
\min\limits_{k\in\{0, N-1\}}\Exp_{\mathcal{U}}\ls\|\nabla f(x_k)\|_{\ast}^2\rs\leqslant \varepsilon_{\nabla f}\text{ after }N = O\lp\frac{n^{2 + \frac{(1-\nu)}{2\nu}}}{\varepsilon_{\nabla f}^{\frac{1}{\nu}}}\rp
\end{align*}
steps, when $\nu \in (0,1]$. For both cases we obtain bounds for the maximum level of noise in the zeroth-order oracle which does not affect the above iteration complexity bounds. The main difference of our work from \cite{NesterovSpokoiny2015} is that we consider the inexact oracle setting,
intermediate smoothness $\nu \in [0,1]$ rather than the cases $\nu \in {0,1}$, which we also cover in a unified manner.
We additionally provide a refined analysis for the case of $\nu = 1$ to achieve the complexity bound $N=O\lp\frac{n}{\varepsilon_{\nabla f}}\rp$ both for $\min\limits_{k\in\{0, N-1\}}\Exp_{\mathcal{U}}\ls\|\nabla f_\mu(x_k)\|_{\ast}^2\rs\leqslant \varepsilon_{\nabla f}$ and $\min\limits_{k\in\{0, N-1\}}\Exp_{\mathcal{U}}\ls\|\nabla f(x_k)\|_{\ast}^2\rs\leqslant \varepsilon_{\nabla f}$, which is similar to the bound in \cite{NesterovSpokoiny2015} for this case.

The rest of the paper is organized as follows. The first section contains necessary definitions and some technical lemmas which extend or improve the corresponding bounds derived in \cite{NesterovSpokoiny2015}. In the second section, we consider a simple gradient descent process with Gaussian-sampling-based finite-difference gradient approximation and obtain complexity bounds for this method in terms of the gradient norm of the smoothed and of the non-smoothed function. We also analyze how the noise in the objective values influences the convergence and what level of inexactness can be tolerated without changing the convergence properties. 

\section{Gaussian smoothing, zeroth-order oracle}
This section provides problem statement, technical preliminaries and properties of the function $f_\mu$ obtained from $f$ by Gaussian smoothing, as well as the gradient of $f_\mu$, and the estimates for the difference between $f$ and $f_{\mu}$ as well as their gradients.

\subsection{Definitions}

We mostly follow the notation in \cite{NesterovSpokoiny2015} and \cite{Dvurechensky2017GradientMW}, where a similar problem was considered from the point of view of inexact first-order oracle.
We start with some definitions from \cite{NesterovSpokoiny2015}. For an $n$-dimensional space $E$, we denote by $E^{\ast}$ its dual space. The value of a linear function $s \in E^{\ast}$ at point $x \in E$ is denoted by $\la s, x\ra$. We endow the spaces $E$ and $E^{\ast}$ with Euclidean norms
\begin{align}\label{def:norm}
\|x\|^2=\la Bx, x\ra,~\forall x\in E~~~\|s\|_{\ast}^2=\la s, B^{-1}s\ra,~\forall s\in E^{\ast},
\end{align}
where $B:E\to E^{\ast}$ is a linear operator s.t.  $B\succ 0$. 

In this paper we consider the problem of the form
\begin{align}\label{opt_problem}
    \min\limits_{x\in E} f(x)    
\end{align}
under the two following  assumptions.
\begin{assumption}\label{asmpt:noisy_oracle}
    The function $f(x)$ is equipped with an \textit{inexact zeroth-order oracle} $\tilde{f}(x,\delta)$ with some $\delta>0$ i.e. there exists $\delta > 0$ and one can calculate $\tilde{f}(x, \delta )\in \mathbb{R}$ satisfying, for all $x \in E$,
	\begin{align}\label{def:inexact_zeroth_order_oracle}
	& |f(x) - \tilde{f}(x, \delta)|\leqslant\delta.
	\end{align}
\end{assumption}
\begin{assumption}\label{asmpt:holder_grad_function}
    The function $f(x)$ is differentiable with H\"older-continuous gradient with some $\nu \in [0, 1]$ and  $L_{\nu}\geqslant 0$ i.e.
	\begin{align}
		\|\nabla f(y) - \nabla f(x)\|_{\ast} \leqslant L_{\nu}\|y-x\|^{\nu},~\forall x,y\in E.
	\end{align}
\end{assumption}
The latter inequality gives a useful inequality
\begin{align}
	f(y) \leqslant  f(x) + \la \nabla f(x), y-x\ra +  \frac{L_{\nu}}{1 +\nu}\|y-x\|^{1 + \nu},~\forall x,y\in E.
\end{align}

Next, we consider the Gaussian smoothed version of $f(x)$ defined in \cite{NesterovSpokoiny2015}.

\begin{definition}\label{def:gaussian_approximation}
	Consider a function $f:E\to\mathbb{R}$.  Its Gaussian approximation $f_{\mu}(x)$  is defined as
	\begin{align}\label{def:gaussian_approximation:f_exp}
		f_{\mu}(x) = \frac{1}{\kappa}\int\limits_{E}f(x+\mu u) e^{-\tfrac{1}{2}\|u\|^2}du,
	\end{align}
	where
	\begin{align}
		\kappa \overset{\text{def}}{=} \int\limits_{E}e^{-\tfrac{1}{2}\|u\|^2}du = \frac{(2\pi)^{n/2}}{[\det B]^{1/2}}.
	\end{align}
\end{definition}

It can be shown, that (see \cite{NesterovSpokoiny2015} Section 2 for details)
\begin{align}
	\nabla f_\mu(x)& =\frac{1}{\kappa} \int\limits_{E}\frac{f(x+\mu u) - f(x)}{\mu}e^{-\tfrac{1}{2}\|u\|^2}Budu \label{def:gaussian_approximation:gradient_as_f_difference_exp}\\
	&=\frac{1}{\kappa} \int\limits_{E}\frac{f(x+\mu u)}{\mu}e^{-\tfrac{1}{2}\|u\|^2}Budu \label{def:gaussian_approximation:gradient_as_f_exp}\\
	\nabla f(x) & = \frac{1}{\kappa}\int\limits_{E}\la \nabla f(x), u\ra e^{-\tfrac{1}{2}\|u\|^2}Budu,
\end{align}
where the latter equality holds when $f(x)$ is differentiable at $x$. If $f$ is differentiable on $E$, then
\begin{align}
	\nabla f_\mu(x)& =\frac{1}{\kappa} \int\limits_{E}\nabla f(x+\mu u)e^{-\tfrac{1}{2}\|u\|^2}du. \label{def:gaussian_approximation:gradient_as_gradient_exp}
\end{align}
The Gaussian approximation of the function $\tilde{f}(x,\delta)$ then takes the form \begin{align}\label{def:gaussian_approximation:inexact_f_exp}
	\tilde{f}_{\mu}(x,\delta) = \frac{1}{\kappa}\int\limits_{E}\tilde{f}(x+\mu u,\delta) e^{-\tfrac{1}{2}\|u\|^2}du.
\end{align}
We also define the following vector which plays the role of the gradient of $\tilde{f}_{\mu}(x,\delta)$
%its formal ''gradient'' as
\begin{align}
	\nabla\tilde{f}_\mu(x, \delta) & = \frac{1}{\kappa}  \int\limits_{E}\frac{\tilde{f}(x+\mu u,\delta) - \tilde{f}(x,\delta)}{\mu}e^{-\tfrac{1}{2}\|u\|^2}Budu \label{def:gaussian_approximation:inexact_gradient_as_inexact_f_difference_exp}\\
	& = \frac{1}{\kappa}  \int\limits_{E}\frac{\tilde{f}(x+\mu u,\delta)}{\mu}e^{-\tfrac{1}{2}\|u\|^2}Budu. \label{def:gaussian_approximation:inexact_gradient_as_inexact_f_exp}
\end{align}

For the case of $\delta = 0$ we have $\nabla\tilde{f}_\mu(x, \delta) = \nabla f_\mu(x)$. It is also worth noting that, in general, it is not possible to obtain for $\nabla\tilde{f}_\mu(x, \delta)$ a representation similar to  \eqref{def:gaussian_approximation:gradient_as_gradient_exp} since the function $\tilde{f}(x, \delta)$ is not necessarily differentiable.
%there is no (\ref{def:gaussian_approximation:gradient_as_gradient_exp})-like representation for $\nabla\tilde{f}_\mu(x, \delta)$ because it can be not differentiable in the noise presence.

\subsection{Basic results}

As shown in Lemma 3 in \cite{NesterovSpokoiny2015}, for $f(x)$ with Lipschitz-continuous gradient, it holds that
\begin{align*}
	\|\nabla f_\mu(x)  - \nabla f(x)\|_{\ast} \leqslant \frac{\mu L_1}{2}(n+3)^{\nicefrac{3}{2}}.
\end{align*}
This result was improved and extended (see A.1 in \cite{berahas2020theoretical}) to the noisy case giving
\begin{align*}
	\|\nabla\tilde{f}_\mu(x, \delta)  - \nabla f(x)\|_{\ast} \leqslant \frac{\delta}{\mu}n^{1/2} + \mu L_{1}n^{1/2}.
\end{align*}
Extending it to the H\"older case we can show the following result.

\begin{lemma} \label{Lm:2:1} 
	Under Assumptions \ref{asmpt:noisy_oracle} and \ref{asmpt:holder_grad_function} it holds that
	\begin{align*}
	\|\nabla\tilde{f}_\mu(x, \delta)  - \nabla f_\mu(x)\|_{\ast} & \leqslant \frac{\delta}{\mu}n^{1/2} \\
	\|\nabla f_\mu(x)  - \nabla f(x)\|_{\ast} & \leqslant \mu^{\nu} L_{\nu}n^{\nu/2},
	\end{align*}
	and, consequently,
	\begin{align}
	 \label{C_grad_diff}
		\|\nabla\tilde{f}_\mu(x, \delta)  - \nabla f(x)\|_{\ast} \leqslant \frac{\delta}{\mu}n^{1/2} + \mu^{\nu} L_{\nu}n^{\nu/2}.
	\end{align}
\end{lemma}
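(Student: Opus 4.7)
The plan is to establish the first two inequalities separately; the third is then immediate from the triangle inequality
$\|\nabla\tilde{f}_\mu(x,\delta) - \nabla f(x)\|_* \leq \|\nabla\tilde{f}_\mu(x,\delta) - \nabla f_\mu(x)\|_* + \|\nabla f_\mu(x) - \nabla f(x)\|_*$.
A preparatory computation I will invoke throughout is that under the density $\kappa^{-1}e^{-\|u\|^2/2}$ one has $\mathbb{E}\|u\|^2 = n$ (this is $\mathrm{tr}(B\cdot B^{-1})$ for the Gaussian measure with covariance $B^{-1}$), together with the identity $\|Bu\|_*^2 = \langle Bu, B^{-1}Bu\rangle = \langle Bu, u\rangle = \|u\|^2$ from the definition \eqref{def:norm}.

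For the first inequality, I will subtract the two ``single-term'' representations \eqref{def:gaussian_approximation:gradient_as_f_exp} and \eqref{def:gaussian_approximation:inexact_gradient_as_inexact_f_exp}, which is valid because the terms $f(x)$ and $\tilde{f}(x,\delta)$ contribute nothing (the odd integral $\int_E e^{-\|u\|^2/2} Bu\, du$ vanishes). This yields
\[
\nabla\tilde{f}_\mu(x,\delta) - \nabla f_\mu(x) = \frac{1}{\kappa}\int_E \frac{\tilde{f}(x+\mu u,\delta) - f(x+\mu u)}{\mu}\, e^{-\|u\|^2/2}\, Bu\, du.
\]
Pushing $\|\cdot\|_*$ inside the integral (Minkowski), using Assumption \ref{asmpt:noisy_oracle} to bound the integrand by $\delta$, and using $\|Bu\|_* = \|u\|$, I get $\|\nabla\tilde{f}_\mu(x,\delta) - \nabla f_\mu(x)\|_* \leq \frac{\delta}{\mu}\mathbb{E}\|u\|$, and the result follows from Jensen's inequality $\mathbb{E}\|u\| \leq \sqrt{\mathbb{E}\|u\|^2} = \sqrt{n}$.

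For the second inequality I will start from the representation \eqref{def:gaussian_approximation:gradient_as_gradient_exp}, which is available since $f$ is differentiable by Assumption \ref{asmpt:holder_grad_function}, and subtract $\nabla f(x)$ written as $\frac{1}{\kappa}\int_E \nabla f(x) e^{-\|u\|^2/2} du$ (normalization of the Gaussian). Again passing the norm inside the integral and applying the H\"older bound $\|\nabla f(x+\mu u) - \nabla f(x)\|_* \leq L_\nu \mu^\nu \|u\|^\nu$ pointwise, I obtain
\[
\|\nabla f_\mu(x) - \nabla f(x)\|_* \leq \mu^\nu L_\nu\, \mathbb{E}\|u\|^\nu.
\]
The key step that goes beyond the Lipschitz case of \cite{NesterovSpokoiny2015} is the intermediate moment bound: by Jensen's inequality applied to the concave map $t \mapsto t^{\nu/2}$ with $\nu \in [0,1]$, $\mathbb{E}\|u\|^\nu = \mathbb{E}(\|u\|^2)^{\nu/2} \leq (\mathbb{E}\|u\|^2)^{\nu/2} = n^{\nu/2}$.

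I expect no genuine obstacle: the two endpoint cases $\nu\in\{0,1\}$ are in \cite{NesterovSpokoiny2015} and the Lipschitz noisy case is in \cite{berahas2020theoretical}. The only slightly delicate point is handling $\nu\in(0,1)$ uniformly, where the Jensen interpolation on the concave moment $\mathbb{E}\|u\|^\nu$ gives the clean $n^{\nu/2}$ dependence without having to estimate fractional Gaussian moments via Gamma functions.
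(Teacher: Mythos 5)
Your proposal is correct and follows essentially the same route as the paper: subtract the single-term integral representations, push the dual norm inside using $\|Bu\|_*=\|u\|$, apply Assumption \ref{asmpt:noisy_oracle} (resp.\ the H\"older condition) pointwise, and finish with the triangle inequality. The only cosmetic difference is that you re-derive the Gaussian moment bounds $\Exp\|u\|\leqslant n^{1/2}$ and $\Exp\|u\|^{\nu}\leqslant n^{\nu/2}$ via Jensen's inequality, whereas the paper simply cites Lemma \ref{lemma_1_from_Nesterov_Spokoiny} (whose $p\in[0,2]$ case is exactly this concavity argument).
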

\begin{proof}  \hyperlink{Lm:2:1:proof}{Can be found in Appendix.}
\end{proof}

It can be shown (assuming $f$ is Lipschitz-continuous with constant $L_0$), that $f_{\mu}$ has H\"older-continuous gradient with $\nu=1$ and $L=\frac{n^{1/2}}{\mu}L_0$ (Lemma 2 from \cite{NesterovSpokoiny2015}). Thus, we can obtain in this case that
\begin{align}\label{ineq:holder_bias_zero_delta}
	|f_{\mu}(y)-f_{\mu}(x)-\la \nabla f_{\mu}(x),y-x\ra|\leqslant \frac{L}{2}\|x-y\|^2.
\end{align}
% If the set $X$ is bounded with $diam(X)\leqslant R$ one can proof that this is true for $f_{\mu}(x)$ with H\"older-continuous gradient with $L=\frac{n^{1/2}R^{\nu}}{\mu}L_{\nu}$ (using the fact that)
% \begin{align*}
% 	\|\nabla f(y)-\nabla f(x)\|_{\ast}\leqslant L_{\nu}\|x-y\|^{\nu} \leqslant  L_{\nu}R^{\nu} \Rightarrow |f(y)-f(x)|\leqslant L_{\nu}R^{\nu}\|x - y\|
% \end{align*}
Under more general H\"older condition we can obtain the following inexact version of \eqref{ineq:holder_bias_zero_delta}.
%yet without that assumption we can still obtain a reasonable upper bound

\begin{lemma} \label{Lm:2:2} 
    Under Assumption \ref{asmpt:holder_grad_function} it holds that
	\begin{align*}
	|f_{\mu}(y)-f_{\mu}(x)-\la \nabla f_{\mu}(x),y-x\ra|\leqslant \frac{A_1}{2}\|y-x\|^2 + A_2,
	\end{align*}
	where either %with various possible $A_1, A_2$, specifically
	\begin{align*}
	    A_1 = \frac{L_{\nu}}{\mu^{1-\nu}}n^{\tfrac{1+\nu}{2}} \text{, } A_2 = 0,
	\end{align*}
	or
	\begin{align*}
	    A_1 = \ls\frac{1}{\hat{\delta}}\rs^{\frac{1-\nu}{1+\nu}}\frac{2L_{\nu}}{\mu^{1-\nu}} \text{, } A_2 = \hat{\delta}L_{\nu}\mu^{1+\nu} \text{ where }   \hat{\delta}>0.
	\end{align*}
\end{lemma}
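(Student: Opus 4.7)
The plan is to bound $|f_\mu(y)-f_\mu(x)-\la\nabla f_\mu(x),y-x\ra|$ via two different regularity properties of $f_\mu$ corresponding to the two variants of $(A_1,A_2)$. For the first variant I will show that $\nabla f_\mu$ is globally Lipschitz continuous with constant $A_1=L_\nu n^{(1+\nu)/2}/\mu^{1-\nu}$, so that the standard descent lemma gives the claim with $A_2=0$. For the second I will instead show that $\nabla f_\mu$ inherits the $(\nu,L_\nu)$-H\"older continuity of $\nabla f$, yielding the natural bound $\tfrac{L_\nu}{1+\nu}\|y-x\|^{1+\nu}$, which I then convert into the $\|y-x\|^2$-plus-constant form by Young's inequality with a free scaling parameter matched to $\hat\delta$.

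For the Lipschitz bound, I start from \eqref{def:gaussian_approximation:gradient_as_f_exp} and write
\begin{align*}
\nabla f_\mu(y)-\nabla f_\mu(x)=\frac{1}{\kappa\mu}\int_E\bigl[f(y+\mu u)-f(x+\mu u)\bigr]e^{-\tfrac{1}{2}\|u\|^2}Bu\,du.
\end{align*}
I expand the bracket as $\int_0^1\la\nabla f(x+\mu u+t(y-x)),y-x\ra dt$ via the fundamental theorem of calculus, and then subtract the $u$-independent quantity $\int_0^1\la\nabla f(x+t(y-x)),y-x\ra dt$, which contributes zero to the outer integral because $\int_E e^{-\|u\|^2/2}Bu\,du=0$. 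The resulting integrand is bounded in absolute value by $L_\nu\mu^\nu\|u\|^\nu\|y-x\|$ by Assumption~\ref{asmpt:holder_grad_function}; taking norms, using $\|Bu\|_{\ast}=\|u\|$ together with $\Exp\|u\|^{1+\nu}\leq(\Exp\|u\|^2)^{(1+\nu)/2}=n^{(1+\nu)/2}$ (Jensen's inequality), gives $\|\nabla f_\mu(y)-\nabla f_\mu(x)\|_{\ast}\leq A_1\|y-x\|$. For the H\"older route, \eqref{def:gaussian_approximation:gradient_as_gradient_exp} yields the pointwise bound $\|\nabla f_\mu(y)-\nabla f_\mu(x)\|_{\ast}\leq L_\nu\|y-x\|^\nu$ directly, and then the integration identity $f_\mu(y)-f_\mu(x)-\la\nabla f_\mu(x),y-x\ra=\int_0^1\la\nabla f_\mu(x+s(y-x))-\nabla f_\mu(x),y-x\ra ds$ produces $\tfrac{L_\nu}{1+\nu}\|y-x\|^{1+\nu}$. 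I then apply Young's inequality with conjugate exponents $p=2/(1+\nu)$, $q=2/(1-\nu)$ in the scaled form $a^{1+\nu}\leq\tfrac{(1+\nu)c^p}{2}a^2+\tfrac{(1-\nu)c^{-q}}{2}$ and tune $c>0$ so that the additive term equals $\hat\delta L_\nu\mu^{1+\nu}$; a direct algebraic check then shows that $A_1=L_\nu c^p$ equals $L_\nu\hat\delta^{-(1-\nu)/(1+\nu)}/\mu^{1-\nu}$ up to a numerical factor in $\nu$ that is bounded by $2$ on $[0,1]$, giving the stated (slightly loose) form.

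The main obstacle is the first variant: since $\nabla f$ is only H\"older continuous, one cannot differentiate under the integral to bound $\nabla^2 f_\mu$ directly, and the zero-centering by the $u$-independent piece $\int_0^1\la\nabla f(x+t(y-x)),y-x\ra dt$ is precisely the device that replaces the missing second derivative. Without this trick, a direct application of H\"older continuity would give a modulus involving $\|\mu u+t(y-x)\|^\nu$, which does not split into a pure multiple of $\|y-x\|$ after integration in $u$ and would produce an extra additive $\|y-x\|^{1+\nu}$ term incompatible with the choice $A_2=0$.
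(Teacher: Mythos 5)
Your proof is correct and follows essentially the same route as the paper's: the first variant is the paper's argument verbatim (your ``zero-centering'' by the $u$-independent term is exactly what the difference representation \eqref{def:gaussian_approximation:gradient_as_f_difference_exp} accomplishes, and your Jensen step is the content of Lemma~\ref{lemma_1_from_Nesterov_Spokoiny} for $p=1+\nu\in[0,2]$), while the second variant reaches the same intermediate bound $\tfrac{L_\nu}{1+\nu}\|y-x\|^{1+\nu}$ and your explicit Young's inequality with tuned scaling is precisely Lemma~\ref{lemma_2_from_Nesterov_2015_UGM} with $\tilde{\delta}=\hat{\delta}\mu^{1+\nu}L_\nu$. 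The only cosmetic deviation is that you first establish H\"older continuity of $\nabla f_\mu$ and integrate, whereas the paper averages the pointwise descent inequality for $f$ under the Gaussian measure directly; both yield the identical bound.
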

\begin{proof}  \hyperlink{Lm:2:2:proof}{Can be found in Appendix.}
\end{proof}

One of the most important properties of the smoothed function $f_{\mu}(x)$ is that it provides a uniform approximation for $f$.
%$|f_{\mu}(x) - f(x)|$ is small, 
For example, when $f$ is Lipschitz-continuous with constant $L_{0}$ it can be shown (see Theorem 1 from \cite{NesterovSpokoiny2015}) that
\begin{align*}
	|f_{\mu}(x)-f(x)|\leqslant \mu L_{0}n^{1/2}.
\end{align*}
For the more general case of H\"older-continuous gradient we obtain the following more general result.
\begin{lemma} \label{Lm:2:3} 
	Under Assumption \ref{asmpt:holder_grad_function} it can be shown that
	\begin{align*}
	|f_{\mu}(x)-f(x)|\leqslant \frac{L_{\nu}}{1+\nu}\mu^{1+\nu}n^{\frac{1+\nu}{2}}.
	\end{align*}
\end{lemma}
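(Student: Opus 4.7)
The plan is to start from the definition \eqref{def:gaussian_approximation:f_exp} and write
\begin{align*}
f_\mu(x) - f(x) = \frac{1}{\kappa}\int_E \bigl(f(x+\mu u) - f(x)\bigr) e^{-\tfrac{1}{2}\|u\|^2}\,du,
\end{align*}
where I used that $\frac{1}{\kappa}\int_E e^{-\|u\|^2/2} du = 1$. The idea is then to insert and subtract the linear term $\langle \nabla f(x), \mu u\rangle$ inside the integrand, so that the H\"older-gradient assumption can be applied to the remainder while the linear piece vanishes by symmetry.

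Concretely, first I would note that by symmetry of the Gaussian weight under $u \mapsto -u$,
\begin{align*}
\frac{1}{\kappa}\int_E \langle \nabla f(x), \mu u\rangle\, e^{-\tfrac{1}{2}\|u\|^2}\,du = 0.
\end{align*}
Next, Assumption \ref{asmpt:holder_grad_function} applied along the segment from $x$ to $x + \mu u$ yields the standard H\"older-gradient inequality (written already in the excerpt):
\begin{align*}
\bigl| f(x+\mu u) - f(x) - \langle \nabla f(x), \mu u\rangle \bigr| \leqslant \frac{L_\nu}{1+\nu} \|\mu u\|^{1+\nu} = \frac{L_\nu}{1+\nu}\mu^{1+\nu}\|u\|^{1+\nu}.
\end{align*}
Combining these, after pulling the absolute value inside the integral, I obtain
\begin{align*}
|f_\mu(x) - f(x)| \leqslant \frac{L_\nu}{1+\nu}\,\mu^{1+\nu}\cdot \frac{1}{\kappa}\int_E \|u\|^{1+\nu} e^{-\tfrac{1}{2}\|u\|^2}\,du.
\end{align*}

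The remaining step, and the only mildly non-trivial one, is to bound the Gaussian moment $M_{1+\nu} := \frac{1}{\kappa}\int_E \|u\|^{1+\nu} e^{-\|u\|^2/2}\,du$ by $n^{(1+\nu)/2}$. Since $\frac{1+\nu}{2} \in [\tfrac{1}{2},1]$, the function $t \mapsto t^{(1+\nu)/2}$ is concave on $[0,\infty)$, so Jensen's inequality gives
\begin{align*}
M_{1+\nu} = \mathbb{E}\bigl[(\|u\|^2)^{(1+\nu)/2}\bigr] \leqslant \bigl(\mathbb{E}\|u\|^2\bigr)^{(1+\nu)/2} = n^{(1+\nu)/2},
\end{align*}
where the last equality uses the standard fact $\mathbb{E}\|u\|^2 = n$ for the Gaussian measure with density proportional to $e^{-\langle Bu,u\rangle/2}$ and norm defined by \eqref{def:norm}. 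Plugging this bound back yields exactly $|f_\mu(x) - f(x)| \leqslant \frac{L_\nu}{1+\nu}\mu^{1+\nu}n^{(1+\nu)/2}$, as claimed. The main obstacle, modest as it is, is ensuring the Jensen step is written carefully, since for $\nu = 0$ the concavity is of $\sqrt{\cdot}$ and for $\nu = 1$ one recovers the known bound with equality in expectation.
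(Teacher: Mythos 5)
Your proof is correct and follows essentially the same route as the paper: subtract the linear term $\langle \nabla f(x),\mu u\rangle$ (which integrates to zero by symmetry), apply the H\"older-gradient remainder bound, and control the Gaussian moment $\frac{1}{\kappa}\int_E \|u\|^{1+\nu}e^{-\|u\|^2/2}\,du$ by $n^{(1+\nu)/2}$. The only cosmetic difference is that the paper cites Lemma~\ref{lemma_1_from_Nesterov_Spokoiny} for that moment bound, whereas you re-derive it via Jensen's inequality --- which is exactly how that lemma is proved for $p\in[0,2]$, so the arguments coincide.
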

\begin{proof}  \hyperlink{Lm:2:3:proof}{Can be found in Appendix.}
\end{proof}

From Lemma \ref{Lm:2:1} we can obtain an upper bound which connects the gradient norm of $f$ and gradient norm of its smoothed approximation $f_{\mu}$. This will be the key to translate the convergence rate for the smoothed function gradient to the convergence rate of the original objective $f$ gradient.

\begin{lemma} \label{Lm:2:4} 
	Under Assumption \ref{asmpt:holder_grad_function} it holds that
	\begin{align*}
	\|\nabla f(x)\|_{\ast}^2 \leqslant 2\|\nabla f_{\mu}(x)\|_{\ast}^2 + 2\mu^{2\nu} L_{\nu}^2 n^{\nu}.
	\end{align*}
\end{lemma}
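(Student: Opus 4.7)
The plan is a direct two-step reduction: first use the gradient-bias bound from Lemma \ref{Lm:2:1}, then apply the triangle inequality followed by the elementary estimate $(a+b)^2 \le 2a^2 + 2b^2$.

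More concretely, I would start by writing $\nabla f(x) = \nabla f_\mu(x) + (\nabla f(x) - \nabla f_\mu(x))$ and applying the triangle inequality for $\|\cdot\|_*$ to obtain
\begin{align*}
\|\nabla f(x)\|_* \leqslant \|\nabla f_\mu(x)\|_* + \|\nabla f(x) - \nabla f_\mu(x)\|_*.
\end{align*}
The second term on the right is exactly the quantity controlled by the second inequality in Lemma \ref{Lm:2:1}, which (specialized to $\delta=0$ so that $\nabla \tilde f_\mu = \nabla f_\mu$) gives $\|\nabla f_\mu(x) - \nabla f(x)\|_* \le \mu^\nu L_\nu n^{\nu/2}$.

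The final step is to square both sides and use $(a+b)^2 \le 2a^2 + 2b^2$ for $a,b\ge 0$, which yields
\begin{align*}
\|\nabla f(x)\|_*^2 \leqslant 2\|\nabla f_\mu(x)\|_*^2 + 2\mu^{2\nu} L_\nu^2 n^\nu,
\end{align*}
matching the claimed bound.

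There is no real obstacle here; the lemma is essentially a packaging of Lemma \ref{Lm:2:1} into a form convenient for later use when translating stationarity bounds for $f_\mu$ into stationarity bounds for $f$. The only minor care needed is to note that Lemma \ref{Lm:2:1} applies under Assumption \ref{asmpt:holder_grad_function} alone (since the $\delta/\mu\cdot n^{1/2}$ term vanishes when we compare $\nabla f_\mu$ to $\nabla f$ rather than $\nabla \tilde f_\mu$), so no extra hypothesis beyond H\"older-continuity of $\nabla f$ is required.
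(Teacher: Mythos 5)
Your proposal is correct and follows essentially the same route as the paper's own proof: decompose $\nabla f(x)$ against $\nabla f_\mu(x)$, apply the elementary inequality $(a+b)^2\le 2a^2+2b^2$, and invoke the second bound of Lemma \ref{Lm:2:1}. In fact your squaring step correctly yields $n^{\nu}$ from $\bigl(n^{\nu/2}\bigr)^2$, matching the lemma statement, whereas the paper's appendix displays $n^{2\nu}$ in the final line, which is a typo.
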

\begin{proof}  \hyperlink{Lm:2:4:proof}{Can be found in Appendix.}
\end{proof}

In the next section we consider a gradient descent method with gradient replaced with a random gradient estimation
\begin{align}\label{random_noisy_grad_est}
    g_{\mu}(x,u,\delta)=\frac{\tilde{f}(x+\mu u,\delta) - \tilde{f}(x,\delta)}{\mu}Bu,
\end{align}
where $u$ is a Gaussian random vector with mean $0_n$ and identity covariance matrix $I_n$ ($u\sim\mathcal{N}(0,I_n)$). Thus $\Exp_u\ls g_{\mu}(x,u,\delta)\rs = \nabla\tilde{f}_\mu(x, \delta)$. In what follows we need also one technical result about this estimation.
\begin{lemma} \label{Lm:2:5} 
	Under Assumptions \ref{asmpt:noisy_oracle} and \ref{asmpt:holder_grad_function} for the gradient estimation (\ref{random_noisy_grad_est}) it holds that 
    \begin{align*}
        &\Exp_{u}\ls\|g_{\mu}(x,u,\delta)\|^2_{\ast}\rs \leqslant  20(n+4)\|\nabla f_{\mu}(x)\|^2 + \\
        & + 5\lp \frac{4\delta^2}{\mu^2}n + \frac{4L_{\nu}^2}{(1+\nu)^2}\mu^{2\nu}n^{2+\nu} + \frac{\mu^2 A_1^2}{4}(n+6)^3 + \frac{A_2^2}{\mu^2}n\rp,
    \end{align*}
    where $A_1,A_2$ are constants equal to constants $A_1,A_2$ from Lemma \ref{Lm:2:2}.
\end{lemma}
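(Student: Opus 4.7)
The plan is to exploit $\|Bu\|_{\ast}^2 = \langle Bu, B^{-1}Bu\rangle = \|u\|^2$, which reduces the target quantity to
\[
\|g_\mu(x,u,\delta)\|_{\ast}^2 = \left(\frac{\tilde f(x+\mu u,\delta) - \tilde f(x,\delta)}{\mu}\right)^2 \|u\|^2,
\]
so the remaining work is to control the scalar difference quotient pointwise and then compute Gaussian moments.

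I would insert $f$ and then $f_\mu$ at both endpoints and apply Lemma~\ref{Lm:2:2} to expand $f_\mu(x+\mu u) - f_\mu(x)$ around $x$, obtaining the identity
\begin{align*}
\tilde f(x+\mu u,\delta) - \tilde f(x,\delta) &= \left[\tilde f(x+\mu u,\delta) - f(x+\mu u)\right] - \left[\tilde f(x,\delta) - f(x)\right] \\
&\quad + \left[f(x+\mu u) - f_\mu(x+\mu u)\right] - \left[f(x) - f_\mu(x)\right] \\
&\quad + \mu\langle \nabla f_\mu(x), u\rangle + R,
\end{align*}
with $|R| \leq \tfrac{A_1\mu^2}{2}\|u\|^2 + A_2$. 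Triangle-bounding the two noise differences by $2\delta$ (Assumption~\ref{asmpt:noisy_oracle}), the two smoothing-approximation differences by $\tfrac{2L_\nu}{1+\nu}\mu^{1+\nu}n^{(1+\nu)/2}$ (Lemma~\ref{Lm:2:3}), and treating the $R$ bound as a sum of its two parts yields five non-negative majorants of $\tfrac{|\tilde f(x+\mu u,\delta) - \tilde f(x,\delta)|}{\mu}$, namely $\tfrac{2\delta}{\mu}$, $\tfrac{2L_\nu\mu^\nu n^{(1+\nu)/2}}{1+\nu}$, $|\langle \nabla f_\mu(x), u\rangle|$, $\tfrac{A_1\mu}{2}\|u\|^2$, and $\tfrac{A_2}{\mu}$. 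Squaring via $(a_1+\dots+a_5)^2 \leq 5\sum_i a_i^2$ and multiplying by $\|u\|^2$ gives a pointwise bound whose expectation delivers each of the four non-gradient terms of the claim with the advertised factor $5$.

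For the expectation I would use the Gaussian moment identities $\Exp\|u\|^2 = n$ and $\Exp\|u\|^6 \leq (n+6)^3$ (cf.\ Lemma~1 of \cite{NesterovSpokoiny2015}, adapted to the $B$-induced inner product by the substitution $v = B^{1/2}u$), together with the bound $\Exp\left[\langle a,u\rangle^2\|u\|^2\right] \leq (n+4)\|a\|_{\ast}^2$ for $a \in E^{\ast}$. The gradient coefficient $20(n+4) = 5\cdot 4(n+4)$ in the claim arises from performing a further weighted Young-type separation between the gradient summand and the four error summands (which inflates the gradient coefficient by an extra factor of $4$) before squaring; alternatively, one absorbs this factor by a slightly loose majorization $\Exp\left[\langle a,u\rangle^2\|u\|^2\right] \leq 4(n+4)\|a\|_{\ast}^2$.

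The main obstacle is the constant bookkeeping in the last step: the six pieces of the decomposition must be grouped into exactly five summands so that the two noise contributions combine into the factor $4\delta^2$ and the two smoothing contributions into $\tfrac{4L_\nu^2}{(1+\nu)^2}\mu^{2\nu}n^{1+\nu}$; and the Lemma~\ref{Lm:2:2} remainder, being a \emph{sum} $\tfrac{A_1\mu^2}{2}\|u\|^2 + A_2$, must contribute two separate summands so that the dimension factors $(n+6)^3$ and $n$ appear cleanly after expectation. All remaining ingredients are routine applications of the triangle inequality together with standard Gaussian moment formulas.
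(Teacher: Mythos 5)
Your proposal matches the paper's proof essentially step for step: the same insertion of $f$ and $f_\mu$ at both endpoints, the same five majorants obtained from Assumption~\ref{asmpt:noisy_oracle}, Lemma~\ref{Lm:2:3} and Lemma~\ref{Lm:2:2}, the same $(a_1+\dots+a_5)^2\leqslant 5\sum_i a_i^2$ step, and the same Gaussian moment bounds from Lemma~\ref{lemma_1_from_Nesterov_Spokoiny} and Theorem~\ref{theorem_3_from_Nesterov_Spokoiny}. Your speculation about the origin of the coefficient $20(n+4)$ is unnecessary: the computation you describe already yields $5(n+4)\|\nabla f_{\mu}(x)\|_{\ast}^2$, which is simply dominated by the looser constant $20(n+4)$ stated in the lemma.
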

\begin{proof}  \hyperlink{Lm:2:5:proof}{Can be found in Appendix.}
\end{proof}

\section{Convergence rate analysis}

We consider a gradient descent process
\begin{align}\label{gd_process}
    x_{k+1} = x_k - h_k B^{-1}g_{\mu}(x_k,u_k,\delta),
\end{align}
where $u_k$ is normal random vector and $g_{\mu}(x_k,u_k,\delta)$ is defined in (\ref{random_noisy_grad_est}). We will consider two type of convergence -- in the sense of $\|\nabla f(x_k)\|_{\ast}$ and $\|\nabla f_{\mu}(x_k)\|_{\ast}$. We start with proving the following result
\begin{lemma}\label{Lm:3:1} Consider the process (\ref{gd_process}). Under Assumptions \ref{asmpt:noisy_oracle} and \ref{asmpt:holder_grad_function} it can be shown that after $N-1$ iterations of this process 
    \begin{align}
        \label{ineq:grad_exp_before_A_substitution}
        & \min\limits_{k\in\{0, N-1\}}\Exp_{\mathcal{U}}\ls\|\nabla f_\mu(x_k)\|_{\ast}^2\rs\leqslant 
        \frac{320(n+4)A_1(f_{\mu}(x_0) - f^{\ast})}{ND} + \\
        & + \frac{D}{4(n+4)}\lp \frac{4\delta^2}{\mu^2}n + \frac{4L_{\nu}^2}{(1+\nu)^2}\mu^{2\nu}n^{2+\nu} + \frac{\mu^2 (A_1')^2}{4}(n+6)^3 + \frac{(A_2')^2}{\mu^2}n\rp + \nonumber \\
        & + \frac{320(n+4)A_1}{D}\lp A_2 + \frac{\delta^2}{2A_1\mu^2}n + \frac{\delta^2}{\mu^2}n\rp, \nonumber
    \end{align}
    where $\mathcal{U} = (u_0, . . . , u_{N-1})$ is a random vector composed by i.i.d. $\{u_k\}_{k=0}^{N-1}$, $A_1,A_2$ and $A_1',A_2'$ are the independent pair of constants from Lemma \ref{Lm:2:2} and $D\in(0,1]$.
\end{lemma}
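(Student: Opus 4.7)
The proof should follow the classical template of a descent lemma combined with a telescoping argument, adapted to the inexact zeroth-order setting. The iteration $x_{k+1} = x_k - h_k B^{-1} g_\mu(x_k, u_k, \delta)$ is essentially a stochastic gradient step for the smoothed objective $f_\mu$, with $g_\mu$ playing the role of the noisy unbiased estimator of $\nabla \tilde{f}_\mu(x_k, \delta)$ (not of $\nabla f_\mu(x_k)$, which is the source of most complications).

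First, I would apply Lemma \ref{Lm:2:2} with the unprimed constants $(A_1, A_2)$ to $(x_k, x_{k+1})$. Since $\|B^{-1}g\|^2 = \la g, B^{-1}g\ra = \|g\|_{\ast}^2$, this yields
\begin{align*}
f_\mu(x_{k+1}) \leqslant f_\mu(x_k) - h_k \la \nabla f_\mu(x_k),\, B^{-1}g_\mu(x_k, u_k, \delta)\ra + \tfrac{A_1 h_k^2}{2}\|g_\mu(x_k, u_k, \delta)\|_{\ast}^2 + A_2.
\end{align*}
I would then take the conditional expectation with respect to $u_k$. Using $\Exp_{u_k}[g_\mu(x_k, u_k, \delta)] = \nabla \tilde{f}_\mu(x_k, \delta)$, the linear term decomposes as
\[
\la \nabla f_\mu(x_k),\, B^{-1}\nabla \tilde{f}_\mu(x_k, \delta)\ra = \|\nabla f_\mu(x_k)\|_{\ast}^2 + \la \nabla f_\mu(x_k),\, B^{-1}(\nabla \tilde{f}_\mu(x_k,\delta) - \nabla f_\mu(x_k))\ra,
\]
and the cross term I would control via Cauchy--Schwarz together with a Young-type inequality $ab \leqslant \tfrac{\eta}{2}a^2 + \tfrac{1}{2\eta}b^2$, plugging in the bound $\|\nabla \tilde{f}_\mu(x_k,\delta) - \nabla f_\mu(x_k)\|_{\ast} \leqslant \delta n^{1/2}/\mu$ from Lemma \ref{Lm:2:1}.

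Second, I would apply Lemma \ref{Lm:2:5} to bound $\Exp_{u_k}\ls\|g_\mu(x_k, u_k, \delta)\|_{\ast}^2\rs$. Since Lemma \ref{Lm:2:5} itself relies on Lemma \ref{Lm:2:2} internally, and the pair of constants appearing there can be chosen independently from the pair used in the first descent step, this is where the primed pair $(A_1', A_2')$ enters: it is precisely the Lemma~\ref{Lm:2:2} instantiation inside the bound on $\Exp_{u_k}[\|g_\mu\|_{\ast}^2]$, whence the quantities $(A_1')^2$ and $(A_2')^2$ appear in the middle term of the target inequality.

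Third, collecting everything gives a per-step recursion of the form
\[
\Exp_{u_k}\ls f_\mu(x_{k+1})\rs \leqslant f_\mu(x_k) - \lp (1-\tfrac{\eta}{2})h_k - 10 A_1 (n+4) h_k^2 \rp \|\nabla f_\mu(x_k)\|_{\ast}^2 + R_k,
\]
where $R_k$ absorbs the Young residual $\tfrac{h_k \delta^2 n}{2\eta\mu^2}$, the noise-and-smoothing term $\tfrac{5 A_1 h_k^2}{2}\lp \tfrac{4\delta^2}{\mu^2}n + \tfrac{4L_\nu^2}{(1+\nu)^2}\mu^{2\nu}n^{2+\nu} + \tfrac{\mu^2 (A_1')^2}{4}(n+6)^3 + \tfrac{(A_2')^2}{\mu^2}n\rp$ from Lemma \ref{Lm:2:5}, and the constant $A_2$. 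Choosing a constant step size $h_k = h \propto D / ((n+4) A_1)$ with $D \in (0,1]$ kills the quadratic part and leaves a strictly negative coefficient, of order $h$, in front of $\|\nabla f_\mu(x_k)\|_{\ast}^2$. Taking full expectation over $\mathcal{U}$, telescoping from $k=0$ to $N-1$, using $\Exp[f_\mu(x_N)] \geqslant f^{\ast}$, and using $\min_k \leqslant \tfrac{1}{N}\sum_k$, I would divide through to obtain exactly the stated three-part bound.

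The main obstacle will be the constant bookkeeping in the third step: the Young parameter $\eta$ and the multiplicative constant in the step size $h$ must be chosen in a coupled way so that the effective coefficient in front of $\|\nabla f_\mu(x_k)\|_{\ast}^2$ stays bounded away from zero uniformly in $D \in (0,1]$, while simultaneously matching the exact coefficients (most notably the prefactor $320(n+4)A_1/D$ and the $\tfrac{\delta^2 n}{2A_1\mu^2}$ contribution that arises from the interaction between the Young step and the $h$-dependent factor in the noise term). Everything else is arithmetic and telescoping.
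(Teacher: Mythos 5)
Your proposal is correct and follows essentially the same route as the paper: a descent step from Lemma \ref{Lm:2:2}, Young's inequality on the bias $\nabla\tilde{f}_\mu(x_k,\delta)-\nabla f_\mu(x_k)$ controlled via Lemma \ref{Lm:2:1}, the second-moment bound of Lemma \ref{Lm:2:5} with an independently chosen primed pair $(A_1',A_2')$, a step size $h\propto D/((n+4)A_1)$, and telescoping. The only (cosmetic) difference is that the paper first rewrites the descent inequality in terms of $\nabla\tilde{f}_\mu(x_k,\delta)$ and converts $\|\nabla\tilde{f}_\mu(x_k,\delta)\|_{\ast}^2$ back to $\|\nabla f_\mu(x_k)\|_{\ast}^2$ at the end (which is where the extra $\delta^2 n/\mu^2$ term in the third bracket comes from), whereas you keep $\nabla f_\mu(x_k)$ throughout and split off the cross term directly; with any $\eta\geqslant D/(80(n+4))$ your bookkeeping yields a slightly tighter bound that still implies the stated inequality.
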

\begin{proof}
    From Lemma \ref{Lm:2:1}, Lemma \ref{Lm:2:2} and the fact that $ab\leqslant\frac{Ca^2}{2} + \frac{b^2}{2C}$ where $C>0$, $a=\|y-x\|$ and $b=\frac{\delta}{\mu}n^{1/2}$ we obtain 
    \begin{align*}
        & |f_{\mu}(y)-f_{\mu}(x) - \la \nabla \tilde{f}_{\mu}(x,\delta),y-x\ra| \leqslant \\ 
        & \leqslant |f_{\mu}(y)-f_{\mu}(x) - \la \nabla f_{\mu}(x),y-x\ra| + |\la \nabla \tilde{f}_{\mu}(x,\delta)-\nabla f_{\mu}(x),y-x\ra| \leqslant \\ 
        & \leqslant \frac{A_1}{2}\|y-x\|^2 + A_2 + \frac{\delta}{\mu}n^{1/2}\|y-x\| \leqslant \\ 
        & \leqslant \lp\frac{A_1}{2}+\frac{C}{2}\rp\|y-x\|^2 + A_2 + \frac{\delta^2}{2C\mu^2}n \overset{C=A_1}{=} A_1\|y-x\|^2 + \lp A_2 + \frac{\delta^2}{2A_1\mu^2}n\rp.
    \end{align*}
    Consider a gradient descent process (\ref{gd_process}). Substituting it and (\ref{random_noisy_grad_est}) into the last inequality and taking the expectation in $u_k$ we obtain
    \begin{align*}
        \Exp_{u_k}\ls f_{\mu}(x_{k+1})\rs \leqslant & ~ f_{\mu}(x_k) - h_k\| \nabla \tilde{f}_{\mu}(x_k,\delta)\|^2_{\ast} + h_k^2A_1\Exp_{u_k}\ls\|g_{\mu}(x_k,u_k,\delta)\|^2_{\ast}\rs + \\
        & + \lp A_2 + \frac{\delta^2}{2A_1\mu^2}n\rp.
    \end{align*}
    Now let's use the fact that (from $(a+b)^2\leqslant 2a^2 + 2b^2$)
    \begin{align*}
        \| \nabla f_{\mu}(x)\|^2_{\ast} & \leqslant  2\| \nabla \tilde{f}_{\mu}(x,\delta)\|^2_{\ast} +2 \|\nabla f_{\mu}(x) - \nabla \tilde{f}_{\mu}(x,\delta)\|^2_{\ast} \leqslant  \\
        &\leqslant 2\| \nabla \tilde{f}_{\mu}(x,\delta)\|^2_{\ast} + 2\cdot \frac{\delta^2}{\mu^2}n
    \end{align*}
    thus
    \begin{align*}
        \Exp_{u_k}\ls f_{\mu}(x_{k+1})\rs \leqslant & ~ f_{\mu}(x_k) - \frac{h_k}{2}\| \nabla f_{\mu}(x_k)\|^2_{\ast} + h_k^2A_1\Exp_{u_k}\ls\|g_{\mu}(x_k,u_k,\delta)\|^2_{\ast}\rs + \\
        & + \lp A_2 + \frac{\delta^2}{2A_1\mu^2}n + \frac{\delta^2}{\mu^2}n\rp.
    \end{align*}
    Substituting result of Lemma \ref{Lm:2:5} (we rename constants from this lemma with $A_1',A_2'$ because it is the second pair of constants, and it can be chosen independently from $A_1,A_2$) we obtain
    \begin{align*}
        & \Exp_{u_k}\ls f_{\mu}(x_{k+1})\rs \leqslant f_{\mu}(x_k) - \lp\frac{h_k}{2} - 20(n+4)h_k^2A_1\rp\| \nabla f_{\mu}(x_k)\|^2_{\ast} +  \\
        & + h_k^2A_1 5\lp \frac{4\delta^2}{\mu^2}n + \frac{4L_{\nu}^2}{(1+\nu)^2}\mu^{2\nu}n^{2+\nu} + \frac{\mu^2 (A_1')^2}{4}(n+6)^3 + \frac{(A_2')^2}{\mu^2}n\rp + \\
        & + \lp A_2 + \frac{\delta^2}{2A_1\mu^2}n + \frac{\delta^2}{\mu^2}n\rp.
    \end{align*}
    Let's choose $h = h_k = \frac{D}{80(n+4)A_1}$ where $D\in(0,1]$ then
    \begin{align*}
        & \Exp_{u_k}\ls f_{\mu}(x_{k+1})\rs \leqslant f_{\mu}(x_k) - \frac{D}{320(n+4)A_1}\| \nabla f_{\mu}(x_k)\|^2_{\ast}  +  \\
        & + \frac{5D^2}{A_1(80(n+4))^2}\lp \frac{4\delta^2}{\mu^2}n + \frac{4L_{\nu}^2}{(1+\nu)^2}\mu^{2\nu}n^{2+\nu} + \frac{\mu^2 (A_1')^2}{4}(n+6)^3 + \frac{(A_2')^2}{\mu^2}n\rp + \\
        & + \lp A_2 + \frac{\delta^2}{2A_1\mu^2}n + \frac{\delta^2}{\mu^2}n\rp
    \end{align*}
    and after summing and taking expectations in $\mathcal{U}$ it becomes
    \begin{align*}
        & \Exp_{\mathcal{U}}\ls f_{\mu}(x_{N})\rs \leqslant f_{\mu}(x_0) - \frac{D}{320(n+4)A_1}\sum\limits_{k=0}^{N-1}\Exp_{\mathcal{U}}\ls\|\nabla f_\mu(x_k)\|_{\ast}^2\rs + \\
        & + \frac{5ND^2}{A_1(80(n+4))^2}\lp \frac{4\delta^2}{\mu^2}n + \frac{4L_{\nu}^2}{(1+\nu)^2}\mu^{2\nu}n^{2+\nu} + \frac{\mu^2 (A_1')^2}{4}(n+6)^3 + \frac{(A_2')^2}{\mu^2}n\rp + \\
        & + N\lp A_2 + \frac{\delta^2}{2A_1\mu^2}n + \frac{\delta^2}{\mu^2}n\rp.
    \end{align*}
    Rearranging terms and using the fact that $f^{\ast}\leqslant \Exp_{\mathcal{U}}\ls f_{\mu}(x_{N})\rs$ we finally obtain (\ref{ineq:grad_exp_before_A_substitution}).
    \qed
\end{proof}

And now we will use it to obtain the rate of convergence and noise bounds for two cases.

\subsection{Convergence in the sense of \texorpdfstring{$\|\nabla f(x_k)\|_{\ast}$}{nablaf(xk)}}

\begin{theorem}\label{thrm:convergence_in_the_sense_of_gradient_norm}
     Consider the process (\ref{gd_process}) and Assumptions \ref{asmpt:noisy_oracle} and \ref{asmpt:holder_grad_function}. Suppose we want to ensure 
    \begin{align*}
        \min\limits_{k\in\{0, N-1\}}\Exp_{\mathcal{U}}\ls\|\nabla f(x_k)\|_{\ast}^2\rs\leqslant \varepsilon_{\nabla f}
    \end{align*}
    then it can be shown that with the right choice of the smoothing parameter $\mu$ this inequality holds after
    \begin{align}\label{thrm:convergence_in_the_sense_of_gradient_norm:N_upper_bound}
        N = O\lp\frac{n^{2 + \frac{1-\nu}{2\nu}}}{\varepsilon_{\nabla f}^{\frac{1}{\nu}}}\rp
    \end{align}
    steps of the process (\ref{gd_process}) under the assumption that
    \begin{align}\label{thrm:convergence_in_the_sense_of_gradient_norm:delta_upper_bound}
        \delta < \frac{\mu^{\frac{3+\nu}{2}}}{n^{\frac{3-\nu}{4}}} = O\left(\frac{\eps_{\nabla f}^{\frac{3+\nu}{4\nu}}}{n^{\frac{3+7\nu}{4\nu}}}\right).
    \end{align}
\end{theorem}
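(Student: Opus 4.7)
The strategy splits into two reductions. First, by Lemma~\ref{Lm:2:4},
\begin{equation*}
\|\nabla f(x_k)\|_{\ast}^2 \leqslant 2\|\nabla f_\mu(x_k)\|_{\ast}^2 + 2\mu^{2\nu}L_\nu^2 n^{\nu},
\end{equation*}
so it suffices to produce $\min_{k}\Exp_{\mathcal{U}}\|\nabla f_\mu(x_k)\|_{\ast}^2 \leqslant \varepsilon_{\nabla f}/4$ while simultaneously enforcing $2\mu^{2\nu}L_\nu^2 n^{\nu} \leqslant \varepsilon_{\nabla f}/2$. The second condition gives an upper bound on the admissible smoothing scale of order $\mu = O(\varepsilon_{\nabla f}^{1/(2\nu)}/n^{1/2})$; the final choice of $\mu$ will in fact be pinned down by a tighter requirement that emerges from the next step.

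Second, apply Lemma~\ref{Lm:3:1} with the first option of Lemma~\ref{Lm:2:2}, namely $A_1 = L_\nu n^{(1+\nu)/2}/\mu^{1-\nu}$ and $A_2 = 0$, and an analogous choice for $(A_1',A_2')$, with $D = \Theta(1)$. The right-hand side of~\eqref{ineq:grad_exp_before_A_substitution} then splits naturally into three groups: (i) the initial-gap term $\tfrac{320(n+4)A_1(f_\mu(x_0)-f^\ast)}{ND}$, which governs $N$; (ii) smoothing-bias pieces of order $\mu^{2\nu}n^{1+\nu}$ and $\mu^{2}(A_1')^2 n^{3}$, which further constrain $\mu$ from above; and (iii) inexactness contributions built from $\delta^2 n/\mu^2$ (with and without the $A_1/D$ prefactor), which set the tolerable noise level.

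Requiring every piece in~(ii) to be $O(\varepsilon_{\nabla f})$ pins $\mu$ down, and substituting that $\mu$ into the expression for $A_1$ produces $A_1 = O\lp L_\nu n^{(1+\nu)/(2\nu)}/\varepsilon_{\nabla f}^{(1-\nu)/(2\nu)}\rp$. The initial-gap condition $N = \Omega(nA_1/\varepsilon_{\nabla f})$ then yields, after simplification, the announced bound $N = O\lp n^{2+(1-\nu)/(2\nu)}/\varepsilon_{\nabla f}^{1/\nu}\rp$. Likewise the dominant noise condition $\delta^2 n A_1/(D\mu^2) = O(\varepsilon_{\nabla f})$ reduces, after substituting $A_1$ and the chosen $\mu$, to $\delta \leqslant \mu^{(3+\nu)/2}/n^{(3-\nu)/4}$, equivalently $\delta = O\lp\varepsilon_{\nabla f}^{(3+\nu)/(4\nu)}/n^{(3+7\nu)/(4\nu)}\rp$, which is~\eqref{thrm:convergence_in_the_sense_of_gradient_norm:delta_upper_bound}.

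The principal obstacle is the bookkeeping: one must verify that all of the (roughly eight) summands on the right-hand side of~\eqref{ineq:grad_exp_before_A_substitution} can be made simultaneously $O(\varepsilon_{\nabla f})$ under a single coherent choice of $\mu$, $D$, and $\delta$, and that the constraints arising from the smoothing-bias and inexactness groups do not contradict the one inherited from Lemma~\ref{Lm:2:4}. It is also worth checking whether the second option in Lemma~\ref{Lm:2:2} applied to $(A_1',A_2')$, with a tuned $\hat\delta'$, sharpens any intermediate step; but this refinement is not required for the claimed rate and can be kept in reserve for the $\nu=1$ specialization discussed later in the paper.
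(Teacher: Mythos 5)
Your overall route is the paper's: reduce via Lemma~\ref{Lm:2:4}, invoke Lemma~\ref{Lm:3:1} with the first option of Lemma~\ref{Lm:2:2} for $(A_1,A_2)$ and $D=\Theta(1)$, then tune $\mu$ against the $\mu^{2\nu}$-bias terms and read off $N$ and $\delta$. However, there is one genuine gap: your dismissal of the second option of Lemma~\ref{Lm:2:2} for the primed pair $(A_1',A_2')$ as an optional refinement ``kept in reserve'' is wrong --- it is essential for the claimed power of $n$. If you take $(A_1',A_2')$ ``analogous'' to $(A_1,A_2)$, i.e.\ $A_1'=L_\nu n^{(1+\nu)/2}/\mu^{1-\nu}$, $A_2'=0$, then the variance term $\tfrac{\mu^2(A_1')^2}{4}(n+6)^3$ in~\eqref{ineq:grad_exp_before_A_substitution} is of order $\mu^{2\nu}L_\nu^2 n^{4+\nu}$, hence of order $\mu^{2\nu}n^{3+\nu}$ after the $1/(n+4)$ prefactor --- a factor $n^2$ larger than the other bias pieces $\mu^{2\nu}n^{1+\nu}$. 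Making this term $O(\varepsilon_{\nabla f})$ forces $\mu\sim(\varepsilon_{\nabla f}/n^{3+\nu})^{1/(2\nu)}$ rather than $(\varepsilon_{\nabla f}/n^{1+\nu})^{1/(2\nu)}$, which inflates $A_1=L_\nu n^{(1+\nu)/2}/\mu^{1-\nu}$ by $n^{(1-\nu)/\nu}$ and hence degrades $N$ by the same factor (an extra full power of $n$ already at $\nu=\tfrac12$). Your intermediate claim $A_1=O\lp L_\nu n^{(1+\nu)/(2\nu)}/\varepsilon_{\nabla f}^{(1-\nu)/(2\nu)}\rp$ is therefore not a consequence of the choices you actually made.

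The paper closes exactly this hole by taking the \emph{second} option for the primed pair, $A_1'=\hat{\delta}^{-\frac{1-\nu}{1+\nu}}\tfrac{2L_\nu}{\mu^{1-\nu}}$, $A_2'=\hat{\delta}L_\nu\mu^{1+\nu}$, with $\hat{\delta}=(n+6)^{\frac{1+\nu}{2}}$ tuned precisely so that $\tfrac{\mu^2(A_1')^2}{4}(n+6)^3\sim\mu^{2\nu}L_\nu^2(n+6)^{2+\nu}$ and $\tfrac{(A_2')^2}{\mu^2}n\sim\mu^{2\nu}L_\nu^2 n(n+6)^{1+\nu}$, i.e.\ both variance contributions match the $\mu^{2\nu}n^{2+\nu}$ scale of the remaining bias term. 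With that correction the rest of your bookkeeping goes through and reproduces~\eqref{thrm:convergence_in_the_sense_of_gradient_norm:N_upper_bound} and~\eqref{thrm:convergence_in_the_sense_of_gradient_norm:delta_upper_bound}. (Note the freedom to choose $(A_1',A_2')$ independently of $(A_1,A_2)$ is exactly why Lemma~\ref{Lm:3:1} carries two pairs of constants; using the second option for $(A_1,A_2)$ itself would instead spoil the first and third lines because $A_2\neq 0$ there.)
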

\begin{proof}
    We will use Lemma \ref{Lm:2:4} to replace the gradient norm with the gradient norm of the smoothed function and then use Lemma \ref{Lm:3:1} 
    \begin{align*}
        & \min\limits_{k\in\{0, N-1\}}\Exp_{\mathcal{U}}\ls\|\nabla f(x)\|_{\ast}^2\rs \overset{Lem.~\ref{Lm:2:4}}{\leqslant} \min\limits_{k\in\{0, N-1\}}\lp 2\Exp_{\mathcal{U}}\ls\|\nabla f_{\mu}(x)\|_{\ast}^2\rs+ 2\mu^{2\nu}L_{\nu}^2 n^{\nu}\rp \overset{(\ref{ineq:grad_exp_before_A_substitution})}{\leqslant} \\ 
        & \leqslant \frac{640(n+4)A_1(f_{\mu}(x_0) - f^{\ast})}{ND} + \\
        & + \frac{D}{2(n+4)}\lp \frac{4\delta^2}{\mu^2}n +
        \frac{4L_{\nu}^2}{(1+\nu)^2}\mu^{2\nu}n^{2+\nu} + \frac{\mu^2 (A_1')^2}{4}(n+6)^3 + \frac{(A_2')^2}{\mu^2}n\rp + \\
        & + \frac{640(n+4)A_1}{D}\lp A_2 + \frac{\delta^2}{2A_1\mu^2}n + \frac{\delta^2}{\mu^2}n\rp + 2\mu^{2\nu}L_{\nu}^2 n^{\nu}.
    \end{align*}
    As we can see, the best achievable power of $\mu$ is $2\nu$, so we can choose the remaining parameters based on this. Consider the case $A_1 = \frac{L_{\nu}}{\mu^{1-\nu}}n^{\frac{1+\nu}{2}},A_2 = 0$ and $A_1' = \ls\frac{1}{\hat{\delta}}\rs^{\frac{1-\nu}{1+\nu}}\frac{2L_\nu}{\mu^{1-\nu}},A_2' = \hat{\delta}L_\nu\mu^{1+\nu}$ with $\hat{\delta} = (n+6)^{\frac{1+\nu}{2}}$ (this is chosen to equalize powers of $n$ in second term):
    \begin{align}
        \label{ineq:grad_exp_after_A_substitution}
        & \min\limits_{k\in\{0, N-1\}}\Exp_{\mathcal{U}}\ls\|\nabla f(x)\|_{\ast}^2\rs \leqslant 
        \frac{640(n+4)L_{\nu}(f_{\mu}(x_0) - f^{\ast})}{ND\mu^{1-\nu}}n^{\frac{1+\nu}{2}} + \\
        & + \frac{D\mu^{2\nu}}{2(n+4)}\lp \frac{4\delta^2}{\mu^{2+2\nu}}n + \frac{4L_{\nu}^2}{(1+\nu)^2}n^{2+\nu} + L_{\nu}^2(n+6)^{2+\nu} + L_{\nu}^2n(n+6)^{1+\nu}\rp + \nonumber \\ 
        & + \frac{640(n+4)L_{\nu}}{D\mu^{1-\nu}}n^{\frac{1+\nu}{2}}\lp 0 + \frac{\delta^2}{2L_{\nu}n^{\frac{1+\nu}{2}}\mu^{1+\nu}}n + \frac{\delta^2}{\mu^2}n\rp + 2\mu^{2\nu}L_{\nu}^2 n^{\nu}.\nonumber 
    \end{align}
    Now we see only terms with $\mu^{2\nu}$ and terms with $\delta^2$ and some powers of $\mu$. To ease assumptions on $\delta$ we can consider maximum possible $D = 1$. The bound for $\delta$ then has form of $\delta\leqslant \frac{\mu^{\alpha}}{n^\beta}$, where $\alpha = \frac{3+\nu}{2}$ (from the third term, we have $\mu^{2\alpha - (1 - \nu) - 2}$ and we want it to be $\mu^{2\nu}$) and $\beta = \frac{3-\nu}{4}$ to equalize powers of $n$ in the second ($n^{1+\nu}$) and the third ($n^{2+\frac{1+\nu}{2}-2\beta}$) terms (therefore $\delta < \frac{\mu^{\frac{3+\nu}{2}}}{n^{\frac{3-\nu}{4}}}$):
    \begin{align*}
        & \min\limits_{k\in\{0, N-1\}}\Exp_{\mathcal{U}}\ls\|\nabla f(x)\|_{\ast}^2\rs \leqslant 
        \frac{640(n+4)L_{\nu}(f_{\mu}(x_0) - f^{\ast})}{N\mu^{1-\nu}}n^{\frac{1+\nu}{2}} + \\
        & + \frac{\mu^{2\nu}}{2(n+4)}\lp 4\mu^{1-\nu}n^{\frac{\nu - 1}{2}} + \frac{4L_{\nu}^2}{(1+\nu)^2}n^{2+\nu} + L_{\nu}^2(n+6)^{2+\nu} + L_{\nu}^2n(n+6)^{1+\nu}\rp + \\ 
        & + 320(n+4)\mu^{2\nu}\lp
        \mu^{1-\nu}n^{\frac{\nu - 1}{2}} + 2L_{\nu}n^{\nu}\rp + 2\mu^{2\nu}L_{\nu}^2 n^{\nu}
    \end{align*}
    (notice, that $\mu^{1-\nu}\leqslant 1$ because $\mu<1$ as the step of gradient estimation, so we will replace $\mu^{1-\nu}$ with $1$ further). Consider $\mu\leqslant\mu_0 = \lp  M \cdot n^{1+\nu} \rp^{-\frac{1}{2\nu}}\varepsilon_{\nabla f}^{\frac{1}{2\nu}}$ where
    \begin{align*}
        M \cdot n^{1+\nu} = 
        & \frac{4n^{\frac{\nu - 1}{2}} + \frac{4L_{\nu}^2}{(1+\nu)^2}n^{2+\nu} + 2L_{\nu}^2(n+3)(n+6)^{1+\nu}}{4(n+4)} + \\ 
        & + 160(n+4)\lp n^{\frac{\nu - 1}{2}} + 2L_{\nu}n^{\nu} \rp + \mu^{2\nu}L_{\nu}^2 n^{\nu}
    \end{align*}
    (thus $M = O(1+L_{\nu}+L_{\nu}^2)$) and substituting it we obtain
    \begin{align*}
        & \min\limits_{k\in\{0, N-1\}}\Exp_{\mathcal{U}}\ls\|\nabla f(x_k)\|_{\ast}^2\rs\leqslant \frac{640(n+4)n^{(1-\nu)\cdot\frac{1+\nu}{2\nu}}L_{\nu}(f_{\mu}(x_0) - f^{\ast})}{N\cdot M^{-\frac{1-\nu}{2\nu}}\varepsilon_{\nabla f}^{\frac{2-2\nu}{2\nu}}}n^{\frac{1+\nu}{2}} + \frac{\varepsilon_{\nabla f}}{2}.
    \end{align*}
    That means that we need to make
    \begin{align*}
        N = O\lp\frac{n^{1 + (1-\nu)\cdot\frac{1+\nu}{2\nu} + \frac{1+\nu}{2}}}{\varepsilon_{\nabla f}^{\frac{1}{\nu}}}\rp = O\lp\frac{n^{2 + \frac{1-\nu}{2\nu}}}{\varepsilon_{\nabla f}^{\frac{1}{\nu}}}\rp
    \end{align*}
    steps to ensure $\min\limits_{k\in\{0, N-1\}}\Exp_{\mathcal{U}}\ls\|\nabla f_\mu(x_k)\|_{\ast}^2\rs\leqslant \varepsilon_{\nabla f}$. It's only left to substitute $\mu$ into upper bound for $\delta$ to obtain (\ref{thrm:convergence_in_the_sense_of_gradient_norm:delta_upper_bound}).
    \qed
\end{proof}

In case $\nu = 1$ the article \cite{NesterovSpokoiny2015} (Section 7) shows that the upper bound for the expected number of steps is $N = O\lp\frac{n}{\varepsilon^2}\rp$ where $\varepsilon^2 = \varepsilon_{\nabla f}$, while we show $N = O\lp\frac{n^{2}}{\varepsilon_{\nabla f}}\rp$, which is $n$ times worse. This can be improved quite easily using the fact that for this case 
\begin{align*}
    \|\nabla f_{\mu}(y)-\nabla f_{\mu}(x)\|_{\ast} =
    & \left\| \frac{1}{\kappa} \int\limits_{E}\lp \nabla f(y+\mu u) - \nabla f(x+\mu u) \rp e^{-\tfrac{1}{2}\|u\|^2}du \right\|_{\ast} \leqslant \\
    \leqslant & \frac{1}{\kappa} \int\limits_{E}L_{1}\|y-x\| e^{-\tfrac{1}{2}\|u\|^2}du = L_{1}\|y-x\|
\end{align*}
then this inequality can be used to set $A_1 = \frac{L_1}{2}$ and $A_2 = 0$ in (\ref{ineq:grad_exp_before_A_substitution}), so the power of $n$ in the first term will be 1 less and repeating following steps we will obtain $N = O\lp\frac{n}{\varepsilon_{\nabla f}}\rp$. This, however, cannot be easily extended to $\nu < 1$, because of $\|x-y\|^{\nu}$ term (see Lemma \ref{Lm:2:2} proof for details). 

\subsection{Convergence in the sense of \texorpdfstring{$\|\nabla f_{\mu}(x_k)\|_{\ast}$}{nablafmu(xk)}}

The main problem of the previous result is that it doesn't work with $\nu = 0$ (which is normal because we cannot ensure gradient norm convergence when the gradient is only bounded) and convergence becomes infinitely slow when $\nu\to 0$. We will now consider the convergence in the sense of smoothed function gradient norm while keeping functional gap (Lemma \ref{Lm:2:3}) small.

\begin{theorem}\label{thrm:convergence_in_the_sense_of_smoothed_gradient_norm}
     Consider the process (\ref{gd_process}) and Assumptions \ref{asmpt:noisy_oracle} and \ref{asmpt:holder_grad_function}. Suppose we want to ensure 
    \begin{align*}
        |f_{\mu}(x)-f(x)|\leqslant \frac{L_{\nu}}{1+\nu}\mu^{1+\nu}n^{\frac{1+\nu}{2}} &\leqslant \varepsilon_{f} \\
        \min\limits_{k\in\{0, N-1\}}\Exp_{\mathcal{U}}\ls\|\nabla f_\mu(x_k)\|_{\ast}^2\rs&\leqslant \varepsilon_{\nabla f}
    \end{align*}
    where $\varepsilon_{f} \sim \varepsilon_{\nabla f}^{\frac{1+\nu}{2\nu+\alpha}}$ then it can be shown that with the right choice of the smoothing parameter $\mu$ these inequalities hold after
    \begin{align}\label{thrm:convergence_in_the_sense_of_smoothed_gradient_norm:N_upper_bound}
        N = O\lp\frac{n^{\frac{7-3\nu}{2}}}{\varepsilon_{\nabla f}^{\frac{3-\nu}{1+\nu}}}\rp
    \end{align}
    steps of the process (\ref{gd_process}) under the assumption that
    \begin{align}\label{thrm:convergence_in_the_sense_of_smoothed_gradient_norm:delta_upper_bound}
        \delta <  \frac{\mu^{\frac{5-\nu}{2}}}{n^{\frac{3-\nu}{4}}} = O\left(\frac{\eps_{\nabla f}^{\frac{5-\nu}{2(1+\nu)}}}{n^{\frac{13-3\nu}{4}}}\right).
    \end{align}
\end{theorem}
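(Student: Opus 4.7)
My plan is to follow the same blueprint as Theorem \ref{thrm:convergence_in_the_sense_of_gradient_norm}, but without the extra step of passing from $\|\nabla f_\mu\|_*$ to $\|\nabla f\|_*$ via Lemma \ref{Lm:2:4}. The starting point is the bound \eqref{ineq:grad_exp_before_A_substitution} from Lemma \ref{Lm:3:1}, which already controls $\min_k \Exp_{\mathcal{U}}[\|\nabla f_\mu(x_k)\|_*^2]$ directly. I would reuse the same substitution as in the proof of Theorem \ref{thrm:convergence_in_the_sense_of_gradient_norm}: take $(A_1, A_2) = (L_\nu n^{(1+\nu)/2}/\mu^{1-\nu},\, 0)$ from the first option of Lemma \ref{Lm:2:2}, take $(A_1', A_2') = (2 L_\nu \hat{\delta}^{-(1-\nu)/(1+\nu)}/\mu^{1-\nu},\, \hat{\delta} L_\nu \mu^{1+\nu})$ from the second option with $\hat{\delta} = (n+6)^{(1+\nu)/2}$, and set $D = 1$. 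This produces a decomposition of the right-hand side into (i) the $N$-dependent leading term of order $(n+4) L_\nu n^{(1+\nu)/2}\Delta_0 / (N \mu^{1-\nu})$ with $\Delta_0 = f_\mu(x_0) - f^*$, (ii) a collection of $N$-independent, $\mu$-only terms, and (iii) noise terms proportional to $\delta^2/\mu^2$.

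The new ingredient is the choice of $\mu$. In Theorem \ref{thrm:convergence_in_the_sense_of_gradient_norm} this choice was dictated by the term $2\mu^{2\nu} L_\nu^2 n^\nu$ coming from Lemma \ref{Lm:2:4}; here it is dictated by the uniform functional-gap requirement of Lemma \ref{Lm:2:3}. I would saturate the constraint $\tfrac{L_\nu}{1+\nu}\mu^{1+\nu} n^{(1+\nu)/2} \leq \varepsilon_f$ by picking $\mu \sim \varepsilon_f^{1/(1+\nu)} n^{-1/2}$, and then fix the dependence of $\varepsilon_f$ on $\varepsilon_{\nabla f}$ (this is exactly the role of the stated relation $\varepsilon_f \sim \varepsilon_{\nabla f}^{(1+\nu)/(2\nu + \alpha)}$) so that every term in group (ii) is simultaneously $O(\varepsilon_{\nabla f})$. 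Plugging the resulting $\mu$ into the leading term in (i) and requiring it to be $O(\varepsilon_{\nabla f})$ then forces the announced iteration count $N = O\!\left(n^{(7-3\nu)/2}/\varepsilon_{\nabla f}^{(3-\nu)/(1+\nu)}\right)$.

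For the noise bound I would isolate the dominant $\delta$-dependent contribution in \eqref{ineq:grad_exp_before_A_substitution} after substitution, namely the $(n+4) A_1 \delta^2 n/\mu^2$ term arising from the $\delta^2/(2 A_1 \mu^2) n + (\delta^2/\mu^2) n$ piece. Requiring it to fit within the $\varepsilon_{\nabla f}$ budget, together with $A_1 = L_\nu n^{(1+\nu)/2}/\mu^{1-\nu}$, yields a constraint of the form $\delta^2 \lesssim \mu^{3-\nu}/n^{(3-\nu)/2}$ up to constants and powers of $\mu$ already absorbed through the $\mu$-choice; rearranging gives $\delta \leq \mu^{(5-\nu)/2}/n^{(3-\nu)/4}$, which is the first half of \eqref{thrm:convergence_in_the_sense_of_smoothed_gradient_norm:delta_upper_bound}, and substituting the chosen $\mu \sim \varepsilon_f^{1/(1+\nu)} n^{-1/2}$ together with the pinned-down relation $\varepsilon_f \leftrightarrow \varepsilon_{\nabla f}$ yields the explicit form in the second half.

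The main obstacle will be the algebraic bookkeeping in the second and third steps: inequality \eqref{ineq:grad_exp_before_A_substitution} contains several additive contributions whose joint scaling in $n$, $\mu$, $\delta$, $A_1$, $A_1'$, $A_2'$ is non-trivial, and one must check that for the chosen $\mu$ and $\delta$ each of them is simultaneously $O(\varepsilon_{\nabla f})$ while consolidating the various powers of $n$ into the announced $n^{(7-3\nu)/2}$. The subtle point is that the link between $\varepsilon_f$ and $\varepsilon_{\nabla f}$ is indirect: it is enforced by the $\mu$-tuning and must be read off from whichever of the $\mu$-only terms in (ii) is tightest, which determines the exponent $\alpha$ implicit in the statement.
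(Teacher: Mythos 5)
There is a genuine gap, and it is concentrated in one choice: you set $D=1$. The whole point of the paper's proof of this theorem, as opposed to Theorem \ref{thrm:convergence_in_the_sense_of_gradient_norm}, is that $D$ must \emph{not} be taken equal to $1$: the paper sets $D=\mu^{\alpha}$ with $\alpha=1-\nu$ (this is where the $\alpha$ in the statement comes from — it is the exponent in the stepsize damping $h_k = D/(80(n+4)A_1)$, not something to be ``read off from the tightest $\mu$-only term'' as you suggest). With $D=1$, the group-(ii) terms in \eqref{ineq:grad_exp_before_A_substitution} scale as $\mu^{2\nu}n^{1+\nu}$, so forcing them below $\varepsilon_{\nabla f}$ pins $\mu^{2\nu}\lesssim \varepsilon_{\nabla f}/n^{1+\nu}$. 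This breaks the result in two ways. First, at $\nu=0$ the term $\frac{D}{4(n+4)}\cdot\frac{4L_{0}^2}{1}n^{2}$ is a constant of order $L_0^2 n$ independent of $\mu$ and $N$, so it can never be driven below $\varepsilon_{\nabla f}/2$; yet the theorem explicitly covers $\nu\in[0,1]$ and the $\nu=0$ case is its main motivation. Second, for $\nu\in(0,1)$ the leading term $\propto (n+4)A_1\Delta_0/(N)\propto 1/(N\mu^{1-\nu})$ then forces $N\sim \varepsilon_{\nabla f}^{-(1+\nu)/(2\nu)}$, which is not the claimed $\varepsilon_{\nabla f}^{-(3-\nu)/(1+\nu)}$ (they agree only at $\nu=1$) and blows up as $\nu\to 0$. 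Taking $D=\mu^{1-\nu}$ turns the group-(ii) scaling into $\mu^{1+\nu}n^{1+\nu}$, which lets one take $\mu\sim\varepsilon_{\nabla f}^{1/(1+\nu)}/n$, makes the functional gap $\varepsilon_f=\Theta(\varepsilon_{\nabla f}/n^{(1+\nu)/2})$, and yields exactly the stated $N$.

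The same omission corrupts your noise bound. With $D=1$ the dominant noise contribution is $(n+4)A_1\,\delta^2 n/\mu^{2}\propto n^{2+(1+\nu)/2}\delta^2/\mu^{3-\nu}$, and requiring it to fit the $\mu^{2\nu}$ budget gives $\delta\lesssim \mu^{(3+\nu)/2}/n^{(3-\nu)/4}$ — i.e.\ you would reproduce the bound \eqref{thrm:convergence_in_the_sense_of_gradient_norm:delta_upper_bound} of Theorem \ref{thrm:convergence_in_the_sense_of_gradient_norm}, not the bound \eqref{thrm:convergence_in_the_sense_of_smoothed_gradient_norm:delta_upper_bound} with exponent $(5-\nu)/2$; the extra factor $\mu^{1-\nu}$ in the denominator coming from dividing by $D=\mu^{1-\nu}$ is precisely what shifts $(3+\nu)/2$ to $(5-\nu)/2$ (and your intermediate claim ``$\delta^2\lesssim\mu^{3-\nu}/n^{(3-\nu)/2}$ rearranges to $\delta\leqslant\mu^{(5-\nu)/2}/n^{(3-\nu)/4}$'' is not a rearrangement: $\sqrt{\mu^{3-\nu}}=\mu^{(3-\nu)/2}$). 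Everything else in your outline — reusing Lemma \ref{Lm:3:1} with the same $(A_1,A_2)$, $(A_1',A_2')$, skipping Lemma \ref{Lm:2:4}, and reading $\varepsilon_f$ off Lemma \ref{Lm:2:3} — matches the paper; the missing idea is the $\mu$-dependent choice of $D$.
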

\begin{proof}
    Substituting the same $A_1,A_2$ and $A_1',A_2'$ as in previous proof into (\ref{ineq:grad_exp_before_A_substitution}) we will obtain almost (\ref{ineq:grad_exp_after_A_substitution}) but without the fourth term and with a smaller constant:
    \begin{align*}
        &  \min\limits_{k\in\{0, N-1\}}\Exp_{\mathcal{U}}\ls\|\nabla f(x)\|_{\ast}^2\rs \leqslant 
        \frac{320(n+4)L_{\nu}(f_{\mu}(x_0) - f^{\ast})}{ND\mu^{1-\nu}}n^{\frac{1+\nu}{2}} + \\
        & + \frac{D\mu^{2\nu}}{4(n+4)}\lp \frac{4\delta^2}{\mu^{2+2\nu}}n + \frac{4L_{\nu}^2}{(1+\nu)^2}n^{2+\nu} + L_{\nu}^2(n+6)^{2+\nu} + L_{\nu}^2n(n+6)^{1+\nu}\rp + \nonumber \\ 
        & + \frac{320(n+4)L_{\nu}}{D\mu^{1-\nu}}n^{\frac{1+\nu}{2}}\lp \frac{\delta^2}{2L_{\nu}n^{\frac{1+\nu}{2}}\mu^{1+\nu}}n + \frac{\delta^2}{\mu^2}n\rp.\nonumber
    \end{align*}
    The difference now is that we are not restricted to use $\eps_{\nabla f} \sim \mu^{2\nu}$, because we can select $D$ to balance powers of $\mu$ (there is no fourth term with its invariable $\mu^{2\nu}$). Let's at first consider a case with $\delta = 0$. Suppose that $D=\mu^{\alpha}$, then
    \begin{align*}
        \min\limits_{k\in\{0, N-1\}}\Exp_{\mathcal{U}}\ls\|\nabla f(x)\|_{\ast}^2\rs \leqslant O\lp\mu^{-(1-\nu+\alpha)}\rp +  O(\mu^{2\nu+\alpha}) + 0
    \end{align*}
    thus $\mu^{2\nu+\alpha}\sim \varepsilon_{\nabla f}$ (because like in previous proof we want to bound the second and the third terms with the $\varepsilon_{\nabla f}/2$) and from Lemma \ref{Lm:2:3} we have 
    \begin{align*}
        \varepsilon_{f}\geqslant \frac{L_{\nu}}{1+\nu}\mu^{1+\nu}n^{\frac{1+\nu}{2}} \sim \varepsilon_{\nabla f}^{\frac{1+\nu}{2\nu+\alpha}}.
    \end{align*}
    
    Now, in previous subsection we had $\mu\sim\varepsilon_{\nabla f}^{\frac{1}{2}}$ (for the case of $\nu=1$), so substituting it into Lemma \ref{Lm:2:3} we would obtain $\varepsilon_{\nabla f}\sim \varepsilon_{f}$. So let's just consider this to be our case, then we can obtain $\frac{1+\nu}{2\nu+\alpha} = 1$ which gives us $\alpha = 1-\nu$ (such reasoning combines results from this and previous sections in the case of $\nu = 1$). 
    
    Now, let's set $D = \mu^{1-\nu} < 1$ and $\delta < \frac{\mu^{\frac{5-\nu}{2}}}{n^{\frac{3-\nu}{4}}}$ (the power of $n$ is chosen similar to the previous proof) then 
    \begin{align*}
        & \min\limits_{k\in\{0, N-1\}}\Exp_{\mathcal{U}}\ls\|\nabla f_\mu(x_k)\|_{\ast}^2\rs\leqslant  \frac{320(n+4)L_{\nu}(f_{\mu}(x_0) - f^{\ast})}{N\mu^{2-2\nu}}n^{\frac{1+\nu}{2}} + \\
        & + \frac{\mu^{1+\nu}}{4(n+4)}\lp 4\mu^{3-3\nu}n^{\frac{\nu - 1}{2}} + \frac{4L_{\nu}^2}{(1+\nu)^2}n^{2+\nu} + 2L_{\nu}^2(n+3)(n+6)^{1+\nu}\rp + \\ 
        & + 160(n+4)\mu^{1+\nu}\lp \mu^{1-\nu}n^{\frac{\nu - 1}{2}} + 2L_{\nu}n^{\nu}\rp.
    \end{align*}
    Consider $\mu\leqslant\mu_0 = \lp  M \cdot n^{1+\nu} \rp^{-\frac{1}{1+\nu}}\varepsilon_{\nabla f}^{\frac{1}{1+\nu}} = \frac{1}{n\cdot M^{\frac{1}{1+\nu}}}\varepsilon_{\nabla f}^{\frac{1}{1+\nu}}$ where
    \begin{align*}
        M\cdot n^{1+\nu} = & \frac{ 4n^{\frac{\nu - 1}{2}} + \frac{4L_{\nu}^2}{(1+\nu)^2}n^{2+\nu} + 2L_{\nu}^2(n+3)(n+6)^{1+\nu}}{8(n+4)} + \\
         & + 80(n+4)\lp n^{\frac{\nu - 1}{2}} + 2L_{\nu}n^{\nu}\rp
    \end{align*}
    and substituting it we obtain
    \begin{align*}
        & \min\limits_{k\in\{0, N-1\}}\Exp_{\mathcal{U}}\ls\|\nabla f_\mu(x_k)\|_{\ast}^2\rs\leqslant \frac{320(n+4)n^{2-2\nu}L_{\nu}(f_{\mu}(x_0) - f^{\ast})}{N\cdot M^{-\frac{2-2\nu}{1+\nu}}\varepsilon_{\nabla f}^{\frac{2-2\nu}{1+\nu}}}n^{\frac{1+\nu}{2}} + \frac{\varepsilon_{\nabla f}}{2}.
    \end{align*}
    That means that we need to make
    \begin{align}
        N = O\lp\frac{n^{1 +(2-2\nu) + \frac{1+\nu}{2}}}{\varepsilon_{\nabla f}^{\frac{3-\nu}{1+\nu}}}\rp = O\lp\frac{n^{\frac{7-3\nu}{2}}}{\varepsilon_{\nabla f}^{\frac{3-\nu}{1+\nu}}}\rp
    \end{align}
    steps to ensure $\min\limits_{k\in\{0, N-1\}}\Exp_{\mathcal{U}}\ls\|\nabla f_\mu(x_k)\|_{\ast}^2\rs\leqslant \varepsilon_{\nabla f}$. Substituting $\mu = \mu_0$ into Lemma \ref{Lm:2:3} we obtain 
    \begin{align*}
        |f_{\mu}(x)-f(x)|\leqslant \frac{L_{\nu}}{1+\nu}\mu^{1+\nu}n^{\frac{1+\nu}{2}} = \Theta\lp \frac{\varepsilon_{\nabla f}}{n^{\frac{1+\nu}{2}}}\rp.
    \end{align*} 
    Thus we ensure $|f_{\mu}(x)-f(x)|\leqslant \varepsilon_{f}$ with $\varepsilon_f = \Theta\lp \frac{\varepsilon_{\nabla f}}{n^{\frac{1+\nu}{2}}}\rp$. The bound (\ref{thrm:convergence_in_the_sense_of_smoothed_gradient_norm:delta_upper_bound}) can be obtained the same  way as in previous theorem.
    \qed
\end{proof}

In case $\nu = 0$ \cite{NesterovSpokoiny2015} shows that $N = O\lp\frac{n^3}{\varepsilon_{f}\varepsilon_{\nabla f}^2}\rp \overset{\varepsilon_f = \Theta\lp \frac{\varepsilon_{\nabla f}}{n^{1/2}}\rp}{=} O\lp\frac{n^{\frac{7}{2}}}{\varepsilon_{\nabla f}^3}\rp$ which coincides with our result. In case $\nu = 1$ this result coincides with the result of the previous theorem, and we can repeat the reasoning at the end improving the result by making the iteration complexity to be proportional to $n$ rather than $n^2$.

We didn't discuss the question of what is the weakest possible bound on $\delta$ at which it is still possible to prove the convergence. It can be easily shown that if we remove powers of $n$ from these $\delta$ upper bounds it won't change the fact of the convergence, however this will increase the powers of $n$ in $N$ bounds. For example in the end of the proof of the Theorem \ref{thrm:convergence_in_the_sense_of_smoothed_gradient_norm} we can choose $\mu_0 = \lp  M \cdot n^{\frac{5+\nu}{2}} \rp^{-\frac{1}{1+\nu}}\varepsilon_{\nabla f}^{\frac{1}{1+\nu}}$ (this is the biggest power of $n$ there) and then repeating the steps we obtain 
\begin{align*}
    N = O\lp\frac{n^{1 +\frac{5+\nu}{2}\frac{1}{1+\nu} + \frac{1+\nu}{2}}}{\varepsilon_{\nabla f}^{\frac{3-\nu}{1+\nu}}}\rp.
\end{align*}
Changing the powers of $\eps_{\nabla f}$ for noise bounds is harder though, and can be a topic of the further studies.

\section{Conclusion} \label{section:conclusion}
In this paper we extend the results of \cite{NesterovSpokoiny2015} to non-convex minimization problems with H\"older-continuous gradients and noisy zeroth-order oracle. Table \ref{convergence-table} below summarizes our results for two types of the quality measures: norm of the gradient of the smoothed version of the objective $f_{\mu}$ and norm of the gradient of the original objective function $f$. We provide an upper bound for the necessary number of iterations $N$ and an upper bound on the oracle inexactness $\delta$ which can be tolerated and still allows to achieve the desired accuracy in terms of the corresponding criterion.
We also show that in the case  $\nu = 1$, the upper bounds for $N$  can be improved by reducing the exponent of $n$ to 1 (second part of the Table \ref{convergence-table}). The interesting fact is that for the case of $\nu = 1$ the upper bound for the noise level $\delta$ is linear in $\eps_{\nabla f}$, and bounds on $N$ and $\delta$ for both $\|\nabla f(x_k)\|_{\ast}$ and $\|\nabla f_{\mu}(x_k)\|_{\ast}$ coincide.

In future it would be interesting to explore in more details the trade-off between the oracle noise level $\delta$ and the iteration number $N$ in terms of their dependence on $n$, which we briefly discussed after the proofs of Theorems \ref{thrm:convergence_in_the_sense_of_gradient_norm} and \ref{thrm:convergence_in_the_sense_of_smoothed_gradient_norm}.
Another interesting question for future research is whether it is possible to obtain a bound for $N$ which continuously depends on $\nu$ and for $\nu=1$ gives the same bound as the bound in \cite{NesterovSpokoiny2015}.

\begin{table}[h!]
  \caption{Convergence properties for the different convergence types}
  \label{convergence-table}
  \begin{tabular}{ccccc}
    \toprule
    Convergence type & $N$ upper bound & $\delta$ upper bound & $|f_{\mu}(x)-f(x)|$ & Possible  $\nu$ \\
    \midrule
    $\Exp\|\nabla f(x_k)\|_{\ast}^2 \leqslant  \varepsilon_{\nabla f}$ & $O\lp\frac{n^{2 + \frac{1-\nu}{2\nu}}}{\varepsilon_{\nabla f}^{\frac{1}{\nu}}}\rp$  & $O\left(\frac{\eps_{\nabla f}^{\frac{3+\nu}{4\nu}}}{n^{\frac{3+7\nu}{4\nu}}}\right)$ & ~---  &  $\nu\in(0,1]$  \\
    $\Exp\|\nabla f_{\mu}(x_k)\|_{\ast}^2 \leqslant \varepsilon_{\nabla f}$ & $O\lp\frac{n^{\frac{7-3\nu}{2}}}{\varepsilon_{\nabla f}^{\frac{3-\nu}{1+\nu}}}\rp$ & $O\left(\frac{\eps_{\nabla f}^{\frac{5-\nu}{2(1+\nu)}}}{n^{\frac{13-3\nu}{4}}}\right)$ &  $\Theta\lp \frac{\varepsilon_{\nabla f}}{n^{\frac{1+\nu}{2}}}\rp$ & $\nu\in[0,1]$  \\
    \midrule
    $\Exp\|\nabla f(x_k)\|_{\ast}^2 \leqslant  \varepsilon_{\nabla f}$ & $O\lp\frac{n}{\varepsilon_{\nabla f}}\rp$ & $O\left(\frac{\eps_{\nabla f}}{n^{\nicefrac{5}{2}}}\right)$ & ~---  &  $\nu=1$  \\
    $\Exp\|\nabla f_{\mu}(x_k)\|_{\ast}^2 \leqslant \varepsilon_{\nabla f}$ & $O\lp\frac{n}{\varepsilon_{\nabla f}}\rp$ & $O\left(\frac{\eps_{\nabla f}}{n^{\nicefrac{5}{2}}}\right)$ &  $\Theta\lp \frac{\varepsilon_{\nabla f}}{n}\rp$ & $\nu = 1$  \\
    \bottomrule
  \end{tabular}
\end{table}

\begin{acknowledgements}
    The authors are grateful to K. Scheinberg and A. Beznosikov for several discussions on derivative-free methods.
\end{acknowledgements}

% BibTeX users please use one of
%\bibliographystyle{spbasic}      % basic style, author-year citations
\bibliographystyle{spmpsci}      % mathematics and physical sciences
\bibliography{main} % name your BibTeX data base

\begin{thebibliography}{10}
\providecommand{\url}[1]{{#1}}
\providecommand{\urlprefix}{URL }
\expandafter\ifx\csname urlstyle\endcsname\relax
  \providecommand{\doi}[1]{DOI~\discretionary{}{}{}#1}\else
  \providecommand{\doi}{DOI~\discretionary{}{}{}\begingroup
  \urlstyle{rm}\Url}\fi

\bibitem{baydin2018automatic}
Baydin, A.G., Pearlmutter, B.A., Radul, A.A., Siskind, J.M.: Automatic
  differentiation in machine learning: a survey.
\newblock arxiv:1502.05767  (2018)

\bibitem{berahas2020theoretical}
Berahas, A.S., Cao, L., Choromanski, K., Scheinberg, K.: A theoretical and
  empirical comparison of gradient approximations in derivative-free
  optimization.
\newblock arxiv:1905.01332  (2019)

\bibitem{berahas2019global}
Berahas, A.S., Cao, L., Scheinberg, K.: Global convergence rate analysis of a
  generic line search algorithm with noise.
\newblock arxiv:1910.04055  (2019)

\bibitem{bolte2020holderian}
Bolte, J., Glaudin, L., Pauwels, E., Serrurier, M.: A h\"olderian backtracking
  method for min-max and min-min problems.
\newblock arxiv:2007.08810  (2020)

\bibitem{brent1973algorithms}
Brent, R.: Algorithms for Minimization Without Derivatives.
\newblock Dover Books on Mathematics. Dover Publications (1973)

\bibitem{conn2009introduction}
Conn, A.R., Scheinberg, K., Vicente, L.N.: Introduction to Derivative-Free
  Optimization.
\newblock Society for Industrial and Applied Mathematics (2009).
\newblock \doi{10.1137/1.9780898718768}

\bibitem{Dvurechensky2017GradientMW}
Dvurechensky, P.: Gradient method with inexact oracle for composite non-convex
  optimization.
\newblock arxiv:1703.09180  (2017)

\bibitem{fabian1967stochastic}
Fabian, V.: Stochastic approximation of minima with improved asymptotic speed.
\newblock Ann. Math. Statist. \textbf{38}(1), 191--200 (1967).
\newblock \doi{10.1214/aoms/1177699070}

\bibitem{ghadimi2013stochastic}
Ghadimi, S., Lan, G.: Stochastic first- and zeroth-order methods for nonconvex
  stochastic programming.
\newblock {SIAM} Journal on Optimization \textbf{23}(4), 2341--2368 (2013).
\newblock \doi{10.1137/120880811}.
\newblock \urlprefix\url{https://doi.org/10.1137/120880811}

\bibitem{kim1984efficient}
Kim, K., Nesterov, Y., Skokov, V., Cherkasskii, B.: Effektivnii algoritm
  vychisleniya proisvodnyh i ekstremalnye zadachi (efficient algorithm for
  calculation of derivatives and extreme problems).
\newblock Ekonomika i matematicheskie metody \textbf{20}(2), 309--318 (1984)

\bibitem{larson2019derivative-free}
Larson, J., Menickelly, M., Wild, S.M.: Derivative-free optimization methods.
\newblock Acta Numerica \textbf{28}, 287–404 (2019).
\newblock \doi{10.1017/S0962492919000060}

\bibitem{liu2018zerothorder}
Liu, S., Kailkhura, B., Chen, P.Y., Ting, P., Chang, S., Amini, L.:
  Zeroth-order stochastic variance reduction for nonconvex optimization.
\newblock Advances in Neural Information Processing Systems \textbf{31},
  3727--3737 (2018)

\bibitem{Nesterov2015UGM}
Nesterov, Y.: Universal gradient methods for convex optimization problems.
\newblock Mathematical Programming \textbf{152}(1), 381--404 (2015).
\newblock \doi{10.1007/s10107-014-0790-0}.
\newblock \urlprefix\url{https://doi.org/10.1007/s10107-014-0790-0}

\bibitem{NesterovSpokoiny2015}
Nesterov, Y., Spokoiny, V.: Random gradient-free minimization of convex
  functions.
\newblock Foundations of Computational Mathematics \textbf{17}(2), 527--566
  (2015).
\newblock \doi{10.1007/s10208-015-9296-2}

\bibitem{rosenbrock1960automatic}
Rosenbrock, H.H.: An automatic method for finding the greatest or least value
  of a function.
\newblock The Computer Journal \textbf{3}(3), 175--184 (1960).
\newblock \doi{10.1093/comjnl/3.3.175}

\bibitem{ghadimi2013minibatch}
Saeed~Ghadimi, G.L., Zhang, H.: Mini-batch stochastic approximation methods for
  nonconvex stochastic composite optimization.
\newblock Mathematical Programming \textbf{155} (2013).
\newblock \doi{10.1007/s10107-014-0846-1}

\bibitem{spall2003introduction}
Spall, J.C.: Introduction to Stochastic Search and Optimization, 1 edn.
\newblock John Wiley \& Sons, Inc., New York, NY, USA (2003)

\bibitem{sutton2018reinforcement}
Sutton, R.S., Barto, A.G.: Reinforcement learning: An introduction.
\newblock MIT press (2018)

\bibitem{DOI:10.1609/aaai.v34i04.6086}
Wang, J., Liu, Y., Li, B.: Reinforcement learning with perturbed rewards.
\newblock Proceedings of the AAAI Conference on Artificial Intelligence
  \textbf{34}, 6202--6209 (2020).
\newblock \doi{10.1609/aaai.v34i04.6086}

\end{thebibliography}

\appendix

\section{Appendix}
\subsection{Proofs of Lemmas 2.1~---2.5}
\hypertarget{Lm:2:1:proof}{}
\begin{proof}[Lemma \ref{Lm:2:1}]
    From (\ref{def:norm}) we get $\|Bu\|_{\ast}^2 = \la Bu,B^{-1}Bu\ra=\la Bu,u\ra=\|u\|^2$.  Using this and Lemma \ref{lemma_1_from_Nesterov_Spokoiny} we obtain
    \begin{align*}
	& \|\nabla\tilde{f}_\mu(x, \delta)  - \nabla f_\mu(x)\|_{\ast} \overset{(\ref{def:gaussian_approximation:inexact_gradient_as_inexact_f_exp})}{=} \\
	& =\left\| \frac{1}{\kappa}\int\limits_{E}\frac{\tilde{f}(x+\mu u,\delta)\pm f(x+\mu u)}{\mu} e^{-\tfrac{1}{2}\|u\|^2}Budu - \nabla f_\mu(x)\right\|_{\ast}\leqslant \\ 
	& \leqslant \left\|\frac{1}{\kappa} \int\limits_{E}\lp\frac{\tilde{f}(x+\mu u,\delta) - f(x+\mu u)}{\mu}\rp  e^{-\tfrac{1}{2}\|u\|^2}Budu \right\|_{\ast} + \\ 
	& + \left\|\frac{1}{\kappa} \int\limits_{E}\frac{f(x+\mu u)}{\mu} e^{-\tfrac{1}{2}\|u\|^2}Budu - \nabla f_\mu(x) \right\|_{\ast}\overset{Asm.~\ref{asmpt:noisy_oracle},~(\ref{def:gaussian_approximation:gradient_as_f_exp})}{\leqslant} \\ 
	& \leqslant \frac{1}{\kappa}\int\limits_{E}\frac{\delta}{\mu} \|u\| e^{-\tfrac{1}{2}\|u\|^2}du +  \left\| \nabla f_\mu(x) - \nabla f_\mu(x) \right\|_{\ast} \overset{Lem.~\ref{lemma_1_from_Nesterov_Spokoiny}}{\leqslant} \frac{\delta}{\mu}n^{1/2} \\
	\end{align*}
	and
    \begin{align*}
	&\|\nabla f_\mu(x)  - \nabla f(x)\|_{\ast} \overset{(\ref{def:gaussian_approximation:gradient_as_gradient_exp})}{=} \left\|\frac{1}{\kappa}\int\limits_{E}\lp\nabla f(x+\mu u) - \nabla f(x) \rp  e^{-\tfrac{1}{2}\|u\|^2}du \right\|_{\ast} \overset{Asm.~\ref{asmpt:holder_grad_function}}{\leqslant} \\ 
	&\leqslant \frac{1}{\kappa}\int\limits_{E}L_{\nu}\|\mu u\|^{\nu}e^{-\tfrac{1}{2}\|u\|^2}du \overset{Lem.~\ref{lemma_1_from_Nesterov_Spokoiny}}{\leqslant} \mu^{\nu} L_{\nu}n^{\nu/2}
	\end{align*}
    thus, finally
	\begin{align*}
	& \|\nabla\tilde{f}_\mu(x, \delta)  - \nabla f(x)\|_{\ast}  \leqslant \|\nabla\tilde{f}_\mu(x, \delta)  - \nabla f_\mu(x)\|_{\ast} + \|\nabla f_\mu(x)  - \nabla f(x)\|_{\ast} \leqslant \\ 
	& \leqslant \frac{\delta}{\mu}n^{1/2} + \mu^{\nu} L_{\nu}n^{\nu/2}.
	\tag*{\qed}
	\end{align*}
    \let\qed\relax
\end{proof}

\hypertarget{Lm:2:2:proof}{}
\begin{proof}[Lemma \ref{Lm:2:2}]
	\begin{align*}
	&\|\nabla f_{\mu}(y)-\nabla f_{\mu}(x)\|_{\ast} \overset{(\ref{def:gaussian_approximation:gradient_as_f_difference_exp})}{=} \\
	& = \frac{1}{\kappa} \left\| \int\limits_{E}\lp\frac{f(y+\mu u) - f(y)}{\mu}-\frac{f(x+\mu u) - f(x)}{\mu}\rp Bu e^{-\tfrac{1}{2}\|u\|^2}du \right\|_{\ast} \leqslant \\ 
	&\leqslant 
	\frac{1}{\mu\kappa} \int\limits_{E}\left| \int\limits_{0}^{1}\la \nabla f(\mu u + ty + (1-t)x) - \nabla f(ty + (1-t)x), y-x\ra dt\right| \|u\| e^{-\tfrac{1}{2}\|u\|^2}du \overset{Asm.~\ref{asmpt:holder_grad_function}}{\leqslant} \\ 
	&\leqslant
	\frac{1}{\mu\kappa} \int\limits_{E}L_{\nu}\mu^{\nu} \|y-x\|\|u\|^{1+\nu} e^{-\tfrac{1}{2}\|u\|^2}du \overset{Lem.~\ref{lemma_1_from_Nesterov_Spokoiny}}{\leqslant}
	\frac{L_{\nu}}{\mu^{1-\nu}}n^{\tfrac{1+\nu}{2}}\|y-x\|.
    \end{align*}
    Integrating this we obtain
	\begin{align}
    f_{\mu}(y)-f_{\mu}(x)-\la \nabla f_{\mu}(x),y-x\ra\leqslant \frac{L_{\nu}}{2\mu^{1-\nu}}n^{\tfrac{1+\nu}{2}}\|y-x\|^2
    \end{align} 
    so using this way we proved lemma with $A_1 = \frac{L_{\nu}}{\mu^{1-\nu}}n^{\tfrac{1+\nu}{2}}$ and $A_2 = 0$.

    The other way to obtain $A_1$ and $A_2$ is to directly upper bound $f_{\mu}(y)-f_{\mu}(x)-\la \nabla f_{\mu}(x),y-x\ra$ applying Lemma \ref{lemma_2_from_Nesterov_2015_UGM}:
    \begin{align*}
    & f_{\mu}(y)-f_{\mu}(x)-\la \nabla f_{\mu}(x),y-x\ra \overset{(\ref{def:gaussian_approximation:f_exp},\ref{def:gaussian_approximation:gradient_as_gradient_exp})}{=} \\
    & = \frac{1}{\kappa}\int\limits_{E}\lp f(y+\mu u) - f(x+\mu u) - \la \nabla f(x+\mu u),y-x\ra\rp e^{-\tfrac{1}{2}\|u\|^2}du \overset{Asm.~\ref{asmpt:holder_grad_function}}{\leqslant} \\ 
    & \leqslant \frac{L_{\nu}}{1+\nu}\|y-x\|^{1+\nu} 
    \overset{\text{Lem.~\ref{lemma_2_from_Nesterov_2015_UGM}}}{\leqslant} \nonumber  \frac{1}{2}\ls\frac{1-\nu}{1+\nu}\frac{2}{\tilde{\delta}}\rs^{\frac{1-\nu}{1+\nu}} L_{\nu}^{\frac{2}{1+\nu}}\|y-x\|^2 + \tilde{\delta}.
    \end{align*}
    Setting $\tilde{\delta} = \hat{\delta}\mu^{1+\nu}L_\nu$ and using upper bound  $\ls 2\frac{1-\nu}{1+\nu}\rs^{\frac{1-\nu}{1+\nu}}\leqslant 2$ we obtain
    \begin{align}\label{lemma_2_2_result_direct}
    & f_{\mu}(y)-f_{\mu}(x)-\la \nabla f_{\mu}(x),y-x\ra \leqslant \ls\frac{1}{\hat{\delta}}\rs^{\frac{1-\nu}{1+\nu}}\frac{L_{\nu}}{\mu^{1-\nu}}\|y-x\|^2 + \hat{\delta}L_{\nu}\mu^{1+\nu}
    \end{align}
    so we proved lemma with $A_1 = \ls\frac{1}{\hat{\delta}}\rs^{\frac{1-\nu}{1+\nu}}\frac{2L_{\nu}}{\mu^{1-\nu}}$ and $A_2 = \hat{\delta}L_{\nu}\mu^{1+\nu}$. 
\end{proof}

\hypertarget{Lm:2:3:proof}{}
\begin{proof}[Lemma \ref{Lm:2:3}]
    To proof this we should notice that 
    \begin{align*}
        \frac{1}{\kappa} \int\limits_{E}\la \nabla f(x), u\ra e^{-\tfrac{1}{2}\|u\|^2}du = 0
    \end{align*}
    thus
    \begin{align*}
        & |f_{\mu}(x)-f(x)| \overset{(\ref{def:gaussian_approximation:f_exp})}{=} \left|\int\limits_{E}\lp f(x+\mu u) - f(x)\rp e^{-\tfrac{1}{2}\|u\|^2}du\right| = \\ 
        & =\left|\int\limits_{E}\lp f(x+\mu u) - f(x) - \la \nabla f(x), \mu u\ra\rp e^{-\tfrac{1}{2}\|u\|^2}du\right| \overset{Asm.~\ref{asmpt:holder_grad_function}}{\leqslant} \\ 
        & \leqslant \frac{L_{\nu}}{1 + \nu}\mu^{1 + \nu}\int\limits_{E}\|u\|^{1+\nu} e^{-\tfrac{1}{2}\|u\|^2}du \overset{Lem.~\ref{lemma_1_from_Nesterov_Spokoiny}}{\leqslant}\frac{L_{\nu}}{1 + \nu}\mu^{1 + \nu}n^{\frac{1+\nu}{2}}.
        \tag*{\qed}
    \end{align*}
    \let\qed\relax
\end{proof}
\hypertarget{Lm:2:4:proof}{}
\begin{proof}[Lemma \ref{Lm:2:4}] From the fact that $a^2 \leqslant 2(a+b)^2 + 2b^2$:
    \begin{align*}
        \|\nabla f(x)\|_{\ast}^2 & \leqslant  2\|\nabla f_{\mu}(x)\|_{\ast}^2 + 2\|\nabla f(x) - \nabla f_{\mu}(x)\|_{\ast}^2 \overset{Lem.~\ref{Lm:2:1}}{\leqslant} \\
        & \leqslant 2\|\nabla f_{\mu}(x)\|_{\ast}^2 + 2\mu^{2\nu}L_{\nu}^2 n^{2\nu}.
        \tag*{\qed}
    \end{align*}
    \let\qed\relax
\end{proof}

\hypertarget{Lm:2:5:proof}{}
\begin{proof}[Lemma \ref{Lm:2:5}]
    \begin{align*}
        \Exp_{u}\ls\|g_{\mu}(x,u,\delta)\|^2_{\ast}\rs = \frac{1}{\kappa}  \int\limits_{E}\lv\frac{\tilde{f}(x+\mu u,\delta) - \tilde{f}(x,\delta)}{\mu}\rv^2 \|u\|^2 e^{-\tfrac{1}{2}\|u\|^2}du
    \end{align*}
    let's bound  $|\tilde{f}(x+\mu u,\delta) - \tilde{f}(x,\delta)|$:
    \begin{align*}
        & |\tilde{f}(x+\mu u,\delta) - \tilde{f}(x,\delta)| \leqslant 2\delta + |f(x+\mu u) - f(x)| \leqslant \\ 
        & \leqslant 2\delta + |f(x+\mu u) - f_{\mu}(x+\mu u) - f(x) + f_{\mu}(x)| + |f_{\mu}(x+\mu u) - f_{\mu}(x)| \overset{Lem.~\ref{Lm:2:3}}{\leqslant} \\ 
        & \leqslant 2\delta + \frac{2L_{\nu}}{1+\nu}\mu^{1+\nu}n^{\frac{1+\nu}{2}} + |f_{\mu}(x+\mu u) - f_{\mu}(x) - \la\nabla f_{\mu}(x),\mu u\ra| + \\ 
        & + |\la\nabla f_{\mu}(x),\mu u\ra| \overset{Lem.~\ref{Lm:2:2}}{\leqslant} \\ 
        & \leqslant 2\delta + \frac{2L_{\nu}}{1+\nu}\mu^{1+\nu}n^{\frac{1+\nu}{2}} + \frac{\mu^2 A_1}{2}\|u\|^2 + A_2 + |\la\nabla f_{\mu}(x),\mu u\ra| 
    \end{align*}
    thus from the fact that $\lp\sum_{i=1}^{k}a_i\rp^2\leqslant k\lp\sum_{i=1}^{k}a_i^2\rp$
    \begin{align*}
        & |\tilde{f}(x+\mu u,\delta) - \tilde{f}(x,\delta)|^2 \leqslant \\
        & \leqslant 5\lp 4\delta^2 + \frac{4L_{\nu}^2}{(1+\nu)^2}\mu^{2+2\nu}n^{1+\nu} + \frac{\mu^4 A_1^2}{4}\|u\|^4 + A_2^2 + \la\nabla f_{\mu}(x),\mu u\ra^2\rp
    \end{align*}
    and applying Theorem \ref{theorem_3_from_Nesterov_Spokoiny} we finally obtain
    \begin{align*}
        &\Exp_{u}\ls\|g_{\mu}(x,u,\delta)\|^2_{\ast}\rs \leqslant  20(n+4)\|\nabla f_{\mu}(x)\|^2 + \\
        & + 5\lp \frac{4\delta^2}{\mu^2}n + \frac{4L_{\nu}^2}{(1+\nu)^2}\mu^{2\nu}n^{2+\nu} + \frac{\mu^2 A_1^2}{4}(n+6)^3 + \frac{A_2^2}{\mu^2}n\rp
        \tag*{\qed}
    \end{align*}
    \let\qed\relax
\end{proof}

\subsection{External results}

\begin{lemma}[Lemma 1 from \cite{NesterovSpokoiny2015}] \label{lemma_1_from_Nesterov_Spokoiny} For $p\geqslant 0$, we have
    \begin{align*}
	\frac{1}{\kappa}\int\limits_{E}\|u\|^{p}e^{-\tfrac{1}{2}\|u\|^2}du\leqslant 
	\begin{cases}
	n^{p/2}, &p\in[0,2]\\
	(n+p)^{p/2}, &p>2
	\end{cases}
	\end{align*}
\end{lemma}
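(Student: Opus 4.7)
The plan is to reduce the integral to a standard Gaussian integral via a linear change of variables, then split into two regimes ($p\le 2$ and $p>2$), using Jensen's inequality in the first case and a Gamma-function estimate in the second. First I would set $v=B^{1/2}u$, so that $\|u\|^{2}=\langle v,v\rangle$ in the standard Euclidean inner product, and the Jacobian $[\det B]^{-1/2}$ exactly cancels the factor $[\det B]^{1/2}$ inside $\kappa=(2\pi)^{n/2}/[\det B]^{1/2}$. The integral on the left then becomes $\mathbb{E}\,|Z|^{p}$ for a standard Gaussian $Z\in\mathbb{R}^{n}$, where $|\cdot|$ denotes the usual Euclidean norm.

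For $p\in[0,2]$ the function $t\mapsto t^{p/2}$ is concave on $[0,\infty)$, so by Jensen's inequality
\[
\mathbb{E}\,|Z|^{p} \;=\; \mathbb{E}\bigl(|Z|^{2}\bigr)^{p/2} \;\leq\; \bigl(\mathbb{E}\,|Z|^{2}\bigr)^{p/2} \;=\; n^{p/2},
\]
which is precisely the first branch of the lemma.

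For $p>2$ I would use that $|Z|^{2}\sim\chi^{2}_{n}$ and compute the moment in closed form: $\mathbb{E}\,|Z|^{p}=2^{p/2}\,\Gamma((n+p)/2)/\Gamma(n/2)$. Matching this against $(n+p)^{p/2}$ reduces the claim to the Gamma-ratio bound $\Gamma((n+p)/2)/\Gamma(n/2)\leq\bigl((n+p)/2\bigr)^{p/2}$. To establish it, write $p/2=k+s$ with $k\in\mathbb{Z}_{\geq 0}$ and $s\in[0,1)$, and split the ratio as
\[
\frac{\Gamma(n/2+k+s)}{\Gamma(n/2)} \;=\; \Bigl(\prod_{i=0}^{k-1}(n/2+i)\Bigr)\cdot\frac{\Gamma(n/2+k+s)}{\Gamma(n/2+k)}.
\]
Each factor in the finite product is at most $n/2+k$, bounding it by $(n/2+k)^{k}$. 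For the remaining Gamma ratio, log-convexity of $\Gamma$ applied on the interval $[n/2+k,\,n/2+k+1]$ gives $\Gamma(n/2+k+s)/\Gamma(n/2+k)\leq(n/2+k)^{s}$. Multiplying the two estimates yields $(n/2+k)^{k+s}\leq\bigl((n+p)/2\bigr)^{p/2}$, since $n/2+k\leq(n+p)/2$ and the exponent $k+s=p/2$ is nonnegative.

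The only non-routine step is the Gamma-ratio bound when $p/2$ is not an integer, because the simple telescoping product representation then breaks down; log-convexity of $\Gamma$ (equivalently, Wendel's inequality) is the cleanest way I know to cover the fractional tail uniformly. Everything else — the change of variables, Jensen, and the $\chi^{2}$ moment formula — is entirely mechanical.
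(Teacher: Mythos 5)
Your proof is correct. Note, however, that the paper itself gives no proof of this statement: it is quoted verbatim as an external result (Lemma 1 of \cite{NesterovSpokoiny2015}) in the appendix, so the only meaningful comparison is with the original source. Your reduction to $\Exp|Z|^p$ for standard Gaussian $Z$ via $v=B^{1/2}u$ and the Jensen argument for $p\in[0,2]$ coincide with the standard treatment. For $p>2$ you diverge from Nesterov--Spokoiny: they avoid Gamma functions entirely, instead bounding the integrand pointwise by $t^{p}e^{-\tau t^{2}/2}\leq\bigl(\tfrac{p}{\tau e}\bigr)^{p/2}$ for a parameter $\tau\in(0,1)$, absorbing the leftover $e^{-(1-\tau)\|u\|^{2}/2}$ into a rescaled Gaussian integral equal to $(1-\tau)^{-n/2}\kappa$, and then optimizing $\tau=\tfrac{p}{n+p}$ together with $(1+p/n)^{n/2}\leq e^{p/2}$. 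Your route through the exact $\chi^{2}_{n}$ moment $2^{p/2}\Gamma\bigl(\tfrac{n+p}{2}\bigr)/\Gamma\bigl(\tfrac{n}{2}\bigr)$ plus the log-convexity (Wendel/Gautschi) estimate for the fractional part of the Gamma ratio is a valid alternative; it is slightly less elementary but yields the sharp closed-form moment as an intermediate quantity, of which $(n+p)^{p/2}$ is then a clean over-estimate. All individual steps check out: the empty product handles $k=0$, the interpolation $\Gamma(a+s)\leq\Gamma(a)^{1-s}\Gamma(a+1)^{s}=a^{s}\Gamma(a)$ is exactly the log-convexity you invoke, and $n/2+k\leq(n+p)/2$ closes the argument.
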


\begin{lemma}[Lemma 2 from \cite{Nesterov2015UGM}] \label{lemma_2_from_Nesterov_2015_UGM} Let the function $f$ satisfy Assumption \ref{asmpt:holder_grad_function}. Then for any $\tilde{\delta}>0$
	\begin{align*}
    \frac{L_{\nu}}{1+\nu}t^{1+\nu}\leqslant \frac{1}{2}\ls\frac{1-\nu}{1+\nu}\frac{2}{\tilde{\delta}}\rs^{\frac{1-\nu}{1+\nu}}L_{\nu}^{\frac{2}{1+\nu}}t^2+\tilde{\delta} = \frac{L}{2}t^2+\tilde{\delta}
    \end{align*}
\end{lemma}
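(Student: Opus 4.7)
The inequality is a Fenchel--Young-type estimate saying that a subquadratic power $t^{1+\nu}$ can always be bounded by a quadratic plus any prescribed tolerance $\tilde{\delta}$, at the cost of enlarging the quadratic coefficient as $\tilde{\delta}$ shrinks. The plan is to derive it from the standard weighted Young inequality with conjugate exponents $p = 2/(1+\nu)$ and $q = 2/(1-\nu)$, which satisfy $1/p + 1/q = 1$ precisely because $\nu \in [0,1)$.

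First I would write $t^{1+\nu} = (\lambda t^{1+\nu}) \cdot \lambda^{-1}$ for a free scaling $\lambda > 0$ and apply Young's inequality to obtain $t^{1+\nu} \leq \frac{1+\nu}{2}\lambda^{2/(1+\nu)} t^2 + \frac{1-\nu}{2}\lambda^{-2/(1-\nu)}$. Multiplying by $L_\nu/(1+\nu)$ and choosing $\lambda$ so that the additive term equals exactly $\tilde{\delta}$ forces $\lambda^{-2/(1-\nu)} = 2(1+\nu)\tilde{\delta}/[L_\nu(1-\nu)]$. Substituting back and using the identity $1 + (1-\nu)/(1+\nu) = 2/(1+\nu)$ to collect powers of $L_\nu$ produces the coefficient $\tfrac{1}{2} L_\nu^{2/(1+\nu)}\bigl[(1-\nu)/(2(1+\nu)\tilde{\delta})\bigr]^{(1-\nu)/(1+\nu)}$ in front of $t^2$. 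This is the \emph{tight} form of the estimate; the coefficient in the stated lemma is larger by exactly the factor $4^{(1-\nu)/(1+\nu)} \geq 1$, so the claimed (slightly loose) inequality follows as an immediate consequence.

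The boundary case $\nu = 1$ has to be handled separately because $q$ is undefined there, but the exponent $(1-\nu)/(1+\nu)$ collapses to zero, the bracketed factor becomes $1$, and the statement reduces to the tautology $\tfrac{L_1}{2}t^2 \leq \tfrac{L_1}{2}t^2 + \tilde{\delta}$. A self-contained alternative that avoids Young altogether is to minimize $g(t) = \tfrac{L}{2}t^2 + \tilde{\delta} - \tfrac{L_\nu}{1+\nu} t^{1+\nu}$ over $t \geq 0$: the unique interior critical point is $t^\ast = (L_\nu/L)^{1/(1-\nu)}$, at which $g(t^\ast) = \tilde{\delta} - L_\nu (t^\ast)^{1+\nu}(1-\nu)/[2(1+\nu)]$, and this is non-negative precisely when $L$ matches or exceeds the tight constant derived above. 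The only point requiring care is the bookkeeping of exponents; there is no real conceptual obstacle, since the result is just a one-parameter optimization of a convex--minus--concave scalar function.
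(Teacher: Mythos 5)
Your proposal is correct. Note that the paper does not actually prove this statement --- it is imported verbatim as an external result (Lemma 2 of the cited universal gradient methods paper) in the appendix, so there is no in-paper argument to compare against; the comparison is against Nesterov's original derivation, which is essentially the same one-dimensional optimization you describe. Your Young-inequality route with exponents $p=2/(1+\nu)$, $q=2/(1-\nu)$ checks out: the optimal choice of $\lambda$ gives the quadratic coefficient $\tfrac{1}{2}L_{\nu}^{2/(1+\nu)}\bigl[\tfrac{1-\nu}{2(1+\nu)\tilde{\delta}}\bigr]^{(1-\nu)/(1+\nu)}$, and the stated coefficient $\tfrac{1}{2}L_{\nu}^{2/(1+\nu)}\bigl[\tfrac{2(1-\nu)}{(1+\nu)\tilde{\delta}}\bigr]^{(1-\nu)/(1+\nu)}$ exceeds it by exactly the factor $4^{(1-\nu)/(1+\nu)}\geqslant 1$ that you identify, so the (looser) claimed inequality follows. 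Your alternative via minimizing $g(t)=\tfrac{L}{2}t^{2}+\tilde{\delta}-\tfrac{L_{\nu}}{1+\nu}t^{1+\nu}$ is also sound: $g'(t)=t^{\nu}(Lt^{1-\nu}-L_{\nu})$ is negative then positive, so $t^{\ast}=(L_{\nu}/L)^{1/(1-\nu)}$ is the global minimizer and the non-negativity condition reproduces the same tight constant. The only cosmetic point is that at $\nu=1$ the bracketed base is $0$ raised to the power $0$, so strictly one should read the statement there as the limit (which is $1$), exactly as you do; with that convention the boundary case is the trivial inequality you state.
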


\begin{theorem}[Theorem 3 from \cite{NesterovSpokoiny2015}] \label{theorem_3_from_Nesterov_Spokoiny} If $f$ is differentiable at $x$ and $u$ is a standard random normal vector, then
    \begin{align*}
        \Exp_{u}\ls\la\nabla f(x), u\ra^2 \|u\|^2\rs \leqslant (n+4)\|\nabla f(x)\|_{\ast}^2
    \end{align*}
\end{theorem}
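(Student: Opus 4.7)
My plan is to reduce Theorem~\ref{theorem_3_from_Nesterov_Spokoiny} to a standard Gaussian moment calculation via the linear change of variables induced by $B$, and then exploit rotational invariance to collapse it to a one-dimensional fourth-moment identity.

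First I would substitute $v = B^{1/2}u$, so that by the definition of $\|\cdot\|$ and $\kappa$ in (\ref{def:norm}) the vector $v$ has density proportional to $e^{-\|v\|_2^2/2}$ on $\mathbb{R}^n$, i.e.\ is a standard Euclidean Gaussian. Under this substitution $\|u\|^2 = \la Bu, u\ra = \|v\|_2^2$ and $\la\nabla f(x), u\ra = \la B^{-1/2}\nabla f(x), v\ra$. Setting $g := B^{-1/2}\nabla f(x)$ one has $\|g\|_2^2 = \la\nabla f(x), B^{-1}\nabla f(x)\ra = \|\nabla f(x)\|_\ast^2$, so proving the theorem reduces to establishing
\begin{align*}
    \Exp_v\bigl[\la g, v\ra^2\, \|v\|_2^2\bigr] \leqslant (n+4)\,\|g\|_2^2
\end{align*}
for any $g\in\mathbb{R}^n$ and standard Gaussian $v$ on $\mathbb{R}^n$.

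Next, by rotational invariance of the standard Gaussian, I would assume without loss of generality that $g = \|g\|_2\,e_1$. Then $\la g, v\ra^2 = \|g\|_2^2\,v_1^2$ and $\|v\|_2^2 = \sum_{i=1}^n v_i^2$, so using the independence of the coordinates of $v$ together with $\Exp[v_1^4] = 3$ and $\Exp[v_1^2 v_i^2] = 1$ for $i\geqslant 2$ gives
\begin{align*}
    \Exp_v\bigl[\la g, v\ra^2\, \|v\|_2^2\bigr] = \|g\|_2^2\bigl(\Exp[v_1^4] + (n-1)\bigr) = (n+2)\|g\|_2^2 \leqslant (n+4)\|g\|_2^2,
\end{align*}
which is the required bound (in fact slightly sharper than stated).

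There is no real obstacle here: once one correctly accounts for the $B$-weighted geometry that is used throughout the paper, the statement is a one-line consequence of the standard Gaussian fourth moment. The only thing to be careful about is not to conflate the two different notions of ``standard'' Gaussian appearing in the text (the one with density $\kappa^{-1}e^{-\|u\|^2/2}$ on $E$, versus the identity-covariance Gaussian on $\mathbb{R}^n$); the substitution $v = B^{1/2}u$ absorbs $B$ cleanly, after which the computation is routine.
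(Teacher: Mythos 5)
Your proof is correct. Note, however, that the paper does not prove this statement at all: it is listed in the appendix under ``External results'' and simply imported as Theorem 3 of \cite{NesterovSpokoiny2015}, so there is no in-paper argument to compare against. Your reduction is sound: the change of variables $v=B^{1/2}u$ does turn the measure $\kappa^{-1}e^{-\|u\|^2/2}\,du$ into the identity-covariance Gaussian (the Jacobian $[\det B]^{-1/2}$ cancels against $\kappa=(2\pi)^{n/2}[\det B]^{-1/2}$), it maps $\|u\|^2$ to $\|v\|_2^2$ and $\|\nabla f(x)\|_*$ to $\|B^{-1/2}\nabla f(x)\|_2$, and the rotational-invariance step with $\Exp[v_1^4]=3$, $\Exp[v_1^2v_i^2]=1$ gives the exact value $(n+2)\|\nabla f(x)\|_*^2$, which is in fact sharper than the stated bound $(n+4)\|\nabla f(x)\|_*^2$. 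By contrast, the original argument in \cite{NesterovSpokoiny2015} proceeds by bounding the integral through their moment estimates (the route that produces the looser constant $n+4$ used throughout); your direct coordinate computation is more elementary and shows the inequality is not tight. One small point worth making explicit if you write this up: the ``standard random normal vector'' in the theorem is the Gaussian adapted to the $B$-geometry (density $\kappa^{-1}e^{-\|u\|^2/2}$ on $E$), which is exactly the distinction your substitution is designed to absorb, and which coincides with $\mathcal{N}(0,I_n)$ when $B=I$ as in the algorithmic sections of the paper.
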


\end{document}